\theoremstyle{plain}
\newtheorem{theorem}{Theorem}[section]
\newtheorem{Corollary}[theorem]{Corollary}
\newtheorem{Proposition}[theorem]{Proposition}
\newtheorem{Lemma}[theorem]{Lemma}
\theoremstyle{definition}
\newtheorem{Definition}[theorem]{Definition}
\newtheorem{Remark}[theorem]{Remark}
\newtheorem{Example}[theorem]{Example}
\newcommand{\thmref}[1]{Theorem~\ref{#1}}
\newcommand{\secref}[1]{Section~\ref{#1}}
\newcommand{\lemref}[1]{Lemma~\ref{#1}}
\newcommand{\corref}[1]{Corollary~\ref{#1}}
\newcommand{\remarkref}[1]{Remark~\ref{#1}}
\newcommand{\defnref}[1]{Definition~\ref{#1}}
\newcommand{\propref}[1]{Proposition~\ref{#1}}
\newcommand{\diagramref}[1]{Diagram~\ref{#1}}
\newcommand{\eqnref}[1]{Equation~\ref{#1}}
 \DeclareMathOperator{\SL}{SL}
\DeclareMathOperator{\PGL}{PGL} 
\DeclareMathOperator{\tr}{tr} 
\newcommand{\BZ}{\mathbb{Z}}\newcommand{\BC}{\mathbb{C}}
\newcommand{\BN}{\mathbb{N}}
\newcommand{\BP}{\mathbb{P}}\newcommand{\BR}{\mathbb{R}}
\newcommand{\BF}{\mathbb{F}}
\newcommand{\BA}{\mathbb{A}}\newcommand{\BQ}{\mathbb{Q}}
\newcommand{\al}{\alpha}
\newcommand{\be}{\beta}
\newcommand{\ga}{\gamma}
\newcommand{\ov}{\overline}
\newcommand{\CE}{\mathcal{E}}\newcommand{\CK}{\mathcal{K}}
\newcommand{\CF}{\mathcal{F}}
\newcommand{\CO}{\mathcal{O}}
\newtheorem{Theorem}[theorem]{Theorem}
\newtheorem{Diagram}[theorem]{Diagram}
\newcommand{\LND}{\rm {LND}}\newcommand{\Bir}{\rm {Bir}}
\newcommand{\pl}{\partial}\newcommand{\sg}{\rm {sgn}}
\newcommand{\Aut}{\rm {Aut}}\newcommand{\Tr}{\rm {Tr}}\newcommand{\ML}{\rm {ML}}
\begin{document}

\title[ ML-invariant and automorphism groups of   word varieties ] {  ML-invariant and automorphism groups of  certain word varieties in $\SL(2,\BC)^2.$}

\author{Tatiana Bandman }

\address{Bandman: Department of Mathematics, Bar-Ilan University, 52900 Ramat Gan, ISRAEL}
\email{bandman@math.biu.ac.il}

\subjclass[2020] {14J50, 14L30, 14L35, 14R20,  14R25,   20G15.}
\dedicatory{Dedicated to Leonid Makar-Linanov for his 80th birthday}
\keywords {special linear group, word map, trace map,  Makar-Limanov invariant, $\BC^+-$ actions,  automorphism group, Jordan groups.}
\begin{abstract}

 For a fixed element $g\in\SL(2,\BC)$ and a word $w=[x^n,y^m]$ we consider the automorphism group $\Aut(S_{g})$ of the affine threefold 
$S_{g}=\{(x,y)\in \SL(2,\BC)^2 \ | w(x,y)=g\}.$  We prove that Makar-Limanov invariant $\ML(S_{g})=\CO(S_{g})$  and $\Aut(S_{g})$  is Jordan.\end{abstract}
\maketitle

\section{Introduction}\label{sec:intro}

 In this  paper  we study an affine three dimensional  subvarieties $S_g:=S_{w,g}$ of $\SL(2,\BC)\times\SL(2,\BC)$ defined by 
the equation  
\begin{equation}\label {Intro:1}     S_{w,g}=\{(x,y)\in\SL(2,\BC)\times\SL(2,\BC) \ |
w(x,y)=g\}. 
 \end{equation}
 Here $w$ is a word in two letters (i.e., an element of the free group $\CF_2$ on two generators) and    $g$ is an  element of the special linear group $\SL(2,\BC).$

 Word equations in $\SL(2,k)$   over   different  fields   $k$  have been the subject of extensive research in recent decades (see, e.g., the survey paper  \cite{BGK}).  Borel's dominance theorem \cite{Bo},  states that if $G$ is a
connected semisimple algebraic group, and $w\ne 1,$  then
the word map
$$w\colon G^2\to G, \ (x,y)\mapsto w(x,y)\in G,$$
 is dominant.
 It was proven in \cite{BZ} that  if $k$ is an algebraically closed field of zero characteristic  then for every word $w$ and every $g\in \SL(2,k)$ 
the set $S_{g}$ is non-empty  subset of $\SL(2,k)\times\SL(2,k)$  provided $\al:=tr(g)\ne \pm 2.$  
 The commutator word $w(x,y)=[x,y]=xyx^{-1}y^{-1}] $ was proven to be  surjective 
for every finite non-abelian simple group  (\cite{LOST}) and for $G=\SL(n,k)$  if the field $k$ contains at least 4 elements (\cite{Thp})

The geometric 
and arithmetic properties of $S_{g}\subset \SL(2,k)^2$ were studied in \cite{BKS} (k being a number field)
for words belonging to the first derived subgroup  $[\CF_2,\CF_2].$
It appeared that these properties mostly depend on  the trace $\al:=tr(g)$ and  the trace polynomial   $P_w(s,t,u)=\tr(w(x,y)) $ of a word $w(x,y).$ 
Here $s=\tr(x),t=\tr(y),u=\tr(xy).$  Actually the main role is played by the {\sl trace hypersurface} $$H_{w,\al}=\{P_w(s,t,u)=\al\}\subset \BA^3_{s,t,u}.$$

 More precisely, in  \cite{BKS})   a certain conic bundle  over the three  dimensional affine space   $\BA^3_{s,t,u}$ is constructed. It does not depend on a word and reflects the intrinsic structure of the group.
For a fixed word $w$ and a fixed $g\in \SL(2,k) $  with   $\al:=tr(g)$  the solution set $S_{g}$ is proven to be isomorphic to an open dense subset of the restriction of  this conic bundle onto $H_{w,\al},$
provided $w\in[\CF_2,\CF_2].$

In this paper we  consider the case $k=\BC.$   We use the geometric description of the set $S_{w,g}$  as specific  conic bundle,  given in  \cite{BKS},  to 
study the   automorphism group  $\Aut(S_{g})$ for certain words $w.$
We compute  the  Makar-Limanov invariant   $\ML(S_{g})$   and show   that  $\Aut(S_{g})$ has  the Jordan property.
 
 The Makar-Limanov invariant  (denoted as $\ML$ or, sometimes,  $\mathrm{AK}$)     was introduced  by L. Makar-Limanov (see,   e.g., \cite{ML},
 \cite{ML96}) as a tool to distinguish 
 a contractible affine variety from an affine space.  
  The idea was that an affine space is covered by the families of straight lines and admits many $\BC^+$- actions. $\ML$-invariant was used to establish  that the famous Koras-Russell threefolds are not isomorphic to $\BA^3,$ and thus in proving the linearization conjecture \cite{KKML}. 
 
 \begin{Definition}\label{MLdef}  Let $X$ be an  an affine variety with the ring of regular  functions $\CO(X) .$ Then $\ML(X)\subset \CO(X) $ is the subring of  all those regular functions 
$f\in\CO(X)$ that are invariant under all $\BC^+$ -actions on $X$.

\end{Definition} 

 For an affine variety $X$ the group $\Aut(X)$ contains no subgroup isomorphic to $\BC^+$ if and only if $\ML(X)=\CO(X).$  If $\ML(X)=\BC$  then  the group  $\Aut(X)$   has a dense orbit.
  We provide the algebraic definition  and some details  in \secref{ML},
and  prove the following.
 \begin{Theorem}\label{Intro:Main}   (\thmref{mlcor:trivial}  ).
 Let  $n,m$ be non-zero integers, $w(x,y)=[x^n,y^m]$ and  let $g\in\SL(2,\BC)$  have trace $\al\ne\pm 2.$ 
Let $$S_g:=\{(x,y)\in \SL(2,\BC)\times \SL(2,\BC) \ | \ w(x,y)=g\}$$ 
Then the Makar-Limanov invariant $\ML(S_g)=\CO(S_g),$  the ring of regular functions on $S_g. $  In other words, $S_g$ admits no $\BC^+-$actions.\end{Theorem}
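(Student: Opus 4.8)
The plan is to recast the statement in terms of locally nilpotent derivations and then exploit the fibration of $S_g$ by the trace map. Recall that $\BC^+$-actions on an affine variety $X$ correspond to locally nilpotent derivations (\LND{}s) of $\CO(X)$, and that $\ML(X)=\CO(X)$ exactly when the only \LND{} of $\CO(X)$ is $0$; so I must show that $A:=\CO(S_g)$ carries no nonzero \LND. Write $\pi=(s,t,u)\colon S_g\to H:=H_{w,\al}\subset\BA^3_{s,t,u}$ for the trace map, which by \cite{BKS} realizes $S_g$, up to a dense open subset, as a conic bundle over the surface $H$. The hypothesis $\al\ne\pm2$ is what rigidifies this fibration. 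Indeed, if $(x,y)$ were reducible its matrices would be simultaneously triangularizable, forcing $\tr\bigl([x^n,y^m]\bigr)=2$; since $\tr(w)=\al\ne 2$ on $S_g$, \emph{every} pair in $S_g$ is irreducible, and $g$ is regular semisimple. Hence the torus $T=Z_{\SL(2,\BC)}(g)\cong\BC^*$ acts on $S_g$ by simultaneous conjugation, fixing $s,t,u$, and the classical bijection between irreducible $\SL(2,\BC)$-pairs and their trace coordinates identifies a general fibre of $\pi$ with one such orbit. A centralizer computation shows that this orbit is a $T$-torsor, so the general fibre of $\pi$ is isomorphic to $\BC^*$ and $\pi$ is generically a principal $\BC^*$-bundle over $H$.

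The decisive input is that a torus admits no $\BC^+$-action: $\CO(\BC^*)=\BC[\xi,\xi^{-1}]$ has no nonzero \LND. First I would use this for \emph{vertical} derivations. If $\pl\ne0$ is an \LND{} of $A$ with $\pi^*\CO(H)\subseteq\ker\pl$, then $\pl$ is $\CO(H)$-linear and extends to a nonzero \LND{} of $A\otimes_{\CO(H)}K$ over $K=\BC(H)$, the coordinate ring of the generic fibre of $\pi$. Since that fibre is a $K$-form of $\BC^*$ and an \LND{} survives base change to $\ov K$, where the fibre becomes $\BC^*$, we obtain a nonzero \LND{} of a ring $\ov K[\xi,\xi^{-1}]$, a contradiction. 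Thus everything reduces to proving that every \LND{} of $A$ is vertical, i.e. that $s,t,u$ lie in the kernel of every \LND. Moreover I would immediately reduce to $T$-homogeneous derivations: the $T$-action grades $A=\bigoplus_{d\in\BZ}A_d$ with $A_0=A^{T}\supseteq\pi^*\CO(H)\ni s,t,u$, and the top-degree homogeneous component of any \LND{} is again a (nonzero, $T$-homogeneous) \LND. Hence it suffices to show that $A$ has no nonzero $T$-homogeneous \LND, since the top part of a putative nonzero \LND{} would provide one.

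For a $T$-homogeneous \LND{} $\ov\pl$ of degree $e$, the vertical case is handled by the fibre argument above: restricting $\ov\pl$ to a general $\BC^*$-fibre forces it to vanish whenever it kills $A_0$. The hard part will be ruling out a \emph{horizontal} homogeneous \LND, i.e. one with $\ov\pl(s),\ov\pl(t),\ov\pl(u)$ not all zero (note that even for $e\ne0$ these images sit in $A_e$ and could be nonzero). This is where the explicit trace polynomial for $w=[x^n,y^m]$ must enter. Writing $a=\tr(x^n)=p_n(s)$, $b=\tr(y^m)=p_m(t)$ and $c=\tr(x^ny^m)$ (which is affine-linear in $u$), one has $P_w=a^2+b^2+c^2-abc-2$, and this expression governs both the surface $H=\{P_w=\al\}$ and the divisor of degenerate, non-$\BC^*$, fibres. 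My plan for the obstacle is to analyze the action of $\ov\pl$ on the generators of $A$ over $A_0$ together with this degeneracy divisor, and to show that a nonzero horizontal homogeneous \LND{} would induce an impossible $\BC^+$-action either on a general $\BC^*$-fibre or on the base surface $H$. Establishing this incompatibility is the crux, and it is precisely here that the arithmetic of $p_n,p_m$ and the hypothesis $\al\ne\pm2$ must be used in full.
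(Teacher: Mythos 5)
Your preparatory reductions are sound and consistent with the paper's set-up: the fibers of $\Tr$ on $S_g$ are indeed $Z_G(g)$-orbits isomorphic to $\BC^*$ (this is \propref{prop:main}(e)); the localization argument killing \emph{vertical} locally nilpotent derivations (no nonzero locally nilpotent derivation on $\ov K[\xi,\xi^{-1}]$, and local nilpotency survives base change) is correct; and passing to the top homogeneous component of a derivation with respect to the $\BZ$-grading coming from the $Z_G(g)$-action is a standard, valid device. However, the proposal has a genuine gap exactly where you yourself locate ``the crux'': the horizontal homogeneous case is never ruled out, only announced as a plan. This is not a routine verification left to the reader --- it is the entire geometric content of the theorem. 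Moreover, the plan as stated is doubtful: a horizontal $\BC^+$-action on $S_g$ need not induce any $\BC^+$-action on the base $H$ (a unipotent flow has no reason to permute fibers of $\Tr$), so the promised ``impossible $\BC^+$-action on $H$'' does not follow from anything you wrote, and invoking ``the arithmetic of $p_n,p_m$'' is not an argument.

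The paper avoids the vertical/horizontal dichotomy entirely by working with the logarithmic Kodaira dimension. Two inputs do all the work. First, \propref{coniclem:nonegative}: Fujino's subadditivity \eqnref{intro:102} applied to the fibration of $S=\pi(S_{w,g})$ over $H_{w,\al}$, whose general fibers are $\BC^*$ (so of $\ov{\kappa}=0$), together with $\ov{\kappa}(H_{w,\al})\ge 0$ (\propref{traceprop:kappa}, proved via the generically finite map $\Phi_{n,m}$ onto the commutator surface $H_{1,1}$, which has $\ov{\kappa}=0$), yields $\ov{\kappa}(S_{w,g})\ge 0$. Second, Iitaka's inequality \eqnref{Iitaka1}: \emph{any} $\BC^+$-action, vertical or horizontal, has general orbits isomorphic to $\BC$, hence forces $\ov{\kappa}(S_{w,g})=-\infty$. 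The contradiction finishes the proof with no case analysis at all. If you insist on completing your derivation-theoretic route, the missing ingredient is precisely the bound $\ov{\kappa}(H_{w,\al})\ge 0$: a horizontal action sweeps out $H_{w,\al}$ by images of orbits, i.e.\ by images of $\BC$, which would force $\ov{\kappa}(H_{w,\al})=-\infty$; so some form of \propref{traceprop:kappa} is unavoidable, and without it your argument cannot close.
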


 Next we  consider the following  Jordan  properties of a group.
\begin{Definition}\label{groups} \begin{enumerate}  
 \item  A group $\Gamma$ is called {\sl Jordan} if there is a positive integer $J$ such that
every finite subgroup $B$ of $\Gamma$ contains an abelian subgroup $A$  that is normal in $B$ and such that the index $[B:A]\le J$ (\cite[Question 6.1] {Serre1}, \cite[Definition 2.1]{Pop}).
\item  A group  $\Gamma$  is  {\sl very Jordan} (\cite{BZ20})   if there exist a   commutative normal subgroup  $\Gamma_0$ of $\Gamma,$ an integer $n>0,$ and a  group $F$  such that  \begin{itemize} \item $F$ sits in  a short exact sequence
\begin{equation}\label{veryjordan}
1\to \Gamma_0\to\Gamma \to F\to 1.\end{equation}
\item every finite subgroup of $F$ has at most $n$ elements.\end{itemize}
\end{enumerate}\end{Definition}

 The Jordan property  for  groups of birational and biregular automorphisms of algebraic varieties
 (as well as holomorphic automophisms of complex manifolds or diffeomorphisms of smooth varieties) was  actively studied  recently. One can find the 
 details in,  for example,  surveys    \cite{BZ22},  \cite{BZ24},   \cite{LMZ}.  
   It is known, e.g.,  (see also \propref{jordanprop:known}) that if $X$ is a projective variety, then\begin{itemize}\item 
      the automorphism group $\Aut(X)$  is Jordan (\cite {MengZhang});
\item the group of birational  selfmaps $\Bir (X)$ is Jordan provided  $X$ is    non-uniruled  (\cite{PS14}); 
\item    $\Bir ( A\times \BP^n)$ is not Jordan, where $A$ is an abelian variety of positive dimension  (\cite{Zar14}).
\end{itemize}

  The group $\Aut(X)\subset \Bir X$ for a quasiprojective variety $X$  is less studied. It is known that
  $\Aut(X) $  is Jordan if either $\dim(X)\le 2$   (\cite{BandmanZarhinTG}), or $\dim(X)=3 $ and $X$ is not birational to the product  $E\times \BP^2$ of an elliptic curve $E$ with the projective plane (\cite{BZopen}).  There is no example of an algebraic variety with non-Jordan automorphism group.

  The geometric description of $S_{g}$ as a  conic bundle 
 over the trace hypersurface permits to obtain the following 

\begin{Theorem}\label{Intro:Main1}(\thmref{jordanthm:jordan}).  Let  $n,m$ be non-zero integers.  Let  $\al\ne\pm 2$ be a complex number, and $g\in \SL(2,\BC) $ a  matrix with     $\tr(g)=\al.$ Then  \begin{enumerate}
\item   If $ w\in \CF_2 , \ w=[x^n,y^m],$     then $\Aut(S_{g})$ is Jordan;
\item  If $ w\in \CF_2 , \ w=[x^n,y^m],$  $|n|>2$ and  $|m|>2,$ then $\Aut(S_{g})$  is very Jordan\end{enumerate}\end{Theorem}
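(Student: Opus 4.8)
The plan is to exploit the conic-bundle description of $S_g$ over the trace hypersurface $H_{w,\al}$, which is the common engine for both parts. First I would set up the geometry: from \cite{BKS} the variety $S_g$ maps to the affine surface $H_{w,\al}\subset\BA^3_{s,t,u}$ with generic fibre a conic, and the structure of $\Aut(S_g)$ should be controlled by how automorphisms interact with this fibration. The crucial preliminary input is \thmref{Intro:Main}: since $\ML(S_g)=\CO(S_g)$, there are no $\BC^+$-actions, so $\Aut(S_g)$ contains no algebraic subgroup isomorphic to $\BC^+$ (equivalently no nontrivial unipotent one-parameter subgroups). This rigidity is what makes a Jordan-type statement plausible, because it forces finite subgroups to act through a rather constrained linear/fibered picture rather than through large unipotent pieces.

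\medskip

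For part (1), I would aim to show that every automorphism of $S_g$ descends to (or is at least compatible with) the trace fibration, so that $\Aut(S_g)$ fits into an exact sequence whose kernel acts fibrewise and whose quotient acts on the base surface $H_{w,\al}$. The base is a two-dimensional (quasiprojective) surface, and by \cite{BandmanZarhinTG} automorphism groups of surfaces are Jordan; the fibrewise kernel, acting on a conic bundle with no additive actions, should be reductive-in-character and contribute only bounded finite-to-abelian data. The key algebraic step is identifying the invariants: because the trace functions $s,t,u$ are natural regular functions on $S_g$ (pulled back from the embedding in $\SL(2,\BC)^2$), any automorphism must permute the level structure of the trace map up to finite ambiguity. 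I would assemble Jordanness from the extension via the standard fact that an extension of a Jordan group by a Jordan group with boundedness on the glue is again Jordan (using \propref{jordanprop:known} as the toolbox).

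\medskip

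For part (2), the hypotheses $|n|>2$ and $|m|>2$ should be what upgrades \emph{Jordan} to \emph{very Jordan} in the sense of \defnref{groups}(2). Here I expect the word $[x^n,y^m]$ with large exponents to make the base surface $H_{w,\al}$ genuinely hyperbolic / of non-negative Kodaira dimension, so that its own automorphism group has only finitely many components up to a commutative identity component — precisely the data needed to produce the commutative normal subgroup $\Gamma_0$ and the finite-subgroup-bounded quotient $F$ in the exact sequence \eqref{veryjordan}. Concretely I would show that the identity component $\Aut^0(S_g)$ is commutative (an algebraic torus or trivial, since there is no $\BC^+$), set $\Gamma_0=\Aut^0(S_g)$, and bound the finite subgroups of the component group $F=\Aut(S_g)/\Aut^0(S_g)$ using the rigidity of the large-exponent trace surface.

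\medskip

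The main obstacle, I expect, is the first reduction in part (1): proving that every \emph{abstract} automorphism of the affine threefold $S_g$ actually respects the trace fibration, rather than mixing base and fibre in an uncontrolled way. A priori $\Aut(S_g)$ need not preserve the functions $s,t,u$, and the conic bundle is only a dense open inside the restriction of the universal bundle, so one must rule out automorphisms that fail to extend to the fibration. The natural route is to characterize the trace fibration intrinsically — for instance as the quotient by the (finite or reductive) invariant subring, or via the $\ML$-invariant and its associated rings of semi-invariants — so that it is canonical and hence automorphism-invariant; making this intrinsic characterization precise, and controlling the finitely many automorphisms that do not fix the fibration structure pointwise, is where the real work will lie.
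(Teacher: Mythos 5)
There is a genuine gap, and it sits exactly where you yourself locate ``the main obstacle'': your proof of part (1) hinges on showing that every abstract automorphism of $S_g$ respects the trace fibration, and you offer no argument for this beyond the hope of an ``intrinsic characterization.'' That claim is not available in the generality of part (1), and the paper never proves it there. Indeed, when $|m|=1$ the base $H_{w,\al}$ is rational (\corref{tracecor:rational}), so by \lemref{coniclem:product} the threefold is rational, and there is no reason whatsoever for automorphisms to preserve the conic bundle. The paper's proof of part (1) \emph{sidesteps} fibration-preservation entirely: for rational $H$ it embeds $\Aut(S)$ into the Cremona group $\mathrm{Cr}_3$, which is Jordan by Prokhorov--Shramov (\remarkref{jordanprop:known}); for the remaining cases it invokes \cite[Corollary 5]{BZopen}, which says that the automorphism group of \emph{any} quasiprojective threefold is Jordan unless the threefold is birational to $E\times\BP^2$ with $E$ elliptic, and then disposes of that single exceptional case via \lemref{elliptic}: either the induced map $h:H\to E$ is not surjective (and then the kernel of $\Aut(S)\to\Aut(h(S))$ embeds in $\mathrm{Cr}_2(\BC(E))$, Jordan, while $\Aut(h(S))$ is finite), or the fibres of $h$ have at least three punctures, which forces $\Aut(S)=A_0$ by puncture counting and reduces to \lemref{kappa2}. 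Note also that your ``crucial preliminary input,'' $\ML(S_g)=\CO(S_g)$, is never used in the paper's Jordan argument; the two theorems are parallel consequences of $\ov\kappa(H)\geq 0$, not one an input to the other.

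Fibration-preservation is proved in the paper only under the hypotheses of part (2), and not by any intrinsic characterization but by an elementary count: when $|n|>2$ and $|m|>2$ the functions $s,t$ omit at least three values on $H$, so any curve in $H$ along which $s$ or $t$ is non-constant has at least three punctures, whereas the image $p_{XV}(f(F_v))$ of a fibre (a conic with exactly two punctures) cannot dominate such a curve; hence every automorphism lies in $A_0$, and \propref{traceprop:kappa} plus \lemref{kappa2} finish. Your part (2) also has a secondary flaw: you set $\Gamma_0=\Aut^0(S_g)$ and argue it is a torus ``since there is no $\BC^+$,'' but $\Aut$ of an affine variety is in general not an algebraic group, so ``identity component'' is not defined; the paper instead constructs the commutative normal subgroup concretely, as the automorphisms acting trivially on $H$ and fixing the boundary $D=X_\al\setminus S$, which embed into a (possibly non-split) torus inside $\PGL(2,\BC(H))$ over the function field of the base. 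So while your global picture (fibered extension, Kodaira-dimension rigidity of the base for large exponents) matches the paper's in spirit, the argument as proposed does not close for part (1) and needs repair in part (2).
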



 In  \secref{trace} we provide properties of the trace polynomials and trace hypersurfaces.   In  \secref{conic} we  describe  the construction of the conic bundle and the geometric structure of the solution variety $S_{g}$ following \cite{BKS}. In  \secref{ML} we compute the $\ML$-invariant of $S_{g}$, and in \secref{jordan} we  establish Jordan properties for $\Aut(S_{g}),$  for certain words $w.$ 
\vskip 0.2 cm

{\bf Notation and assumptions 1.}\begin{enumerate}
\item $\BN,\BZ,\BQ,\BR,\BC$ stand for, respectively, the set   of natural numbers and ring of  intergers, and for the fields of rational, real and complex numbers.  $\BC^+,\BC^*$are  the additive and multiplicative groups of complex numbers,  respectively.

\item $G=\SL(2,\BC),$ and $Id\in G$ is the identity matrix.
\item $\tr(x)$ is the trace of element $x\in\SL(2,\BC).$
\item $\CF_2$ is the  free group on two generators; $w\in \CF_2$ is called  a {\sl word in two letters} and the corresponding map $G^2\to G$ is called {\sl the word map}.
 \item $\Tr:G^2\to\BA^3_{s,t,u}$ is   the map $(x,y)\in G^2\mapsto (s,t,u) , $
 where   $s=\tr(x),  \ t=\tr(y), u=\tr(xy).$
\item $\BA^n_{w_1,\dots,w_n}, \BP^N_{w_0:\dots :w_n}$  are, respectively, affine  and projective spaces with coordinates displayed  in the subscript.
\item If $X$ is an affine variety, then $\CO(X)$ and $\BC(X)$   stand for the ring and the field of regular and rational functions,  respectively, on $X.$ 
\item     $\ov{\kappa}(X)$ stands for the logarithmic Kodaira dimension  of $X.$
\item The sign $\cong$ means an isomorphism, $\sim$ means linear equivalence of divisors.
\item $K(X)$ stands for the canonical class    of a variety $X.$
\item $\Aut(X)$ and $\Bir(X)$ are the groups of biregular and birational automorphisms of $X$, respectively.
\item  We say that an assertion is valid for  the general point of a variety $X$ if it is valid for  any point  belonging to an open subset $U\subset X.$
 The general fiber of a morphism $f:X\to Y$  is  the  fiber $f^{-1}(y)$ over any point $y$ belonging to an open subset $U\subset Y.$

\end{enumerate}

{\bf Aknowledgments.} The author is most grateful to Sh. Kaliman, B. Kunyavskii, and Yu.  G. Zarhin for interesting and helpful discussions. 
 My special  thanks  go to Prof. Osamu Fujino for  valuable information related to logarithmic Kodaira dimension.

\section{The Trace map} \label{trace}

  In this section we recall the definition of the trace map for $G=\SL(2,\BC)$ and provide its 
  properties for words we are interested in. The trace map is the classical tool, going back to Vogt and  Klein.
 
Any  word 
$w\in \CF_2$, 
induces the trace map $\Tr_w:\SL(2,\BC)^2\to \BC$
defined by 
$\Tr_w(x,y)= \tr (w(x,y))$.
 We will use the following special case of the general  theorem on  trace map .

\begin{Theorem}\label{Hor} (see, e.g, \cite[Theorem A]{Gol}  and reference therein).
For a word  $w\in \CF _2$, there exist   a trace polynomial $P_w(s,t,u)\in\BZ[s,t,u]$  such that 
$$
\tr(w(x,y))=P_w(s,t,u)
$$
whenever   $x,y\in\SL(2,\BC):=G$ and $s=\tr(x), \ t=\tr(y), \ u=\tr(xy).$
\end{Theorem}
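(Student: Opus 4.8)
The plan is to derive everything from the Cayley--Hamilton theorem for $2\times 2$ matrices and then run an induction on word complexity. If $A\in\SL(2,\BC)$ then $A^2-\tr(A)A+Id=0$, so
\[
A+A^{-1}=\tr(A)\,Id, \qquad A^{n+1}=\tr(A)\,A^{n}-A^{n-1}.
\]
Taking traces yields the three identities I would use repeatedly: first, $\tr(A^{-1})=\tr(A)$; second, multiplying $B+B^{-1}=\tr(B)\,Id$ by an arbitrary $A$ and taking the trace, the fundamental linearization
\[
\tr(AB)+\tr(AB^{-1})=\tr(A)\tr(B);
\]
and third, multiplying $A^{n+1}=\tr(A)A^{n}-A^{n-1}$ by any word $B$ on the right and taking the trace, the power recursion
\[
\tr(A^{n+1}B)=\tr(A)\,\tr(A^{n}B)-\tr(A^{n-1}B).
\]
All three have integer coefficients, which is exactly what will guarantee $P_w\in\BZ[s,t,u]$ at the end.

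Next I would set up the reduction. Using $x^{-1}=s\,Id-x$ and $y^{-1}=t\,Id-y$, rewrite $w(x,y)$ as a noncommutative polynomial in $x,y$ with coefficients in $\BZ[s,t]$; taking the trace then reduces the problem to showing $\tr(M)\in\BZ[s,t,u]$ for every \emph{positive} monomial $M$ in $x,y$. For such an $M$ I would induct on its total length. The power recursion with $A=x$ (resp. $A=y$), applied to the leading syllable, strictly shortens the word and collapses every exponent down to $0$ or $1$, so after finitely many steps one is left with alternating monomials, i.e.\ up to cyclic permutation of the trace the words $(xy)^{k}$ and $x(xy)^{k}$. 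Applying the power recursion once more with $A=xy$ expresses $\tr((xy)^{k})$ as a polynomial in $u$ and $\tr(x(xy)^{k})$ as a polynomial in $s,t,u$, the base cases being $\tr(x)=s$ and $\tr(x\cdot xy)=\tr(x^{2}y)=su-t$.

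The main obstacle here is purely bookkeeping: one must choose a complexity on words so that \emph{each} application of the identities strictly decreases it and the induction terminates at the base cases $\tr(Id)=2$, $\tr(x)=s$, $\tr(y)=t$, $\tr(xy)=u$. The delicate point is the fundamental identity, since replacing $\tr(AB^{-1})$ by $\tr(A)\tr(B)-\tr(AB)$ trades an inverse letter for a possibly longer positive word; this is precisely why I would eliminate all inverses first and then argue entirely within positive monomials, where the power recursion together with cyclic invariance of the trace only ever shortens the word. Finally, since the trace map $\Tr:\SL(2,\BC)^2\to\BA^3_{s,t,u}$ is dominant, its image is Zariski dense, so any polynomial representing the regular function $(x,y)\mapsto\tr(w(x,y))$ in terms of $(s,t,u)$ is uniquely determined. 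Hence the reduction above, which a priori might depend on the order in which the identities are applied, in fact yields a well-defined element $P_w\in\BZ[s,t,u]$, as asserted.
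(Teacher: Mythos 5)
Your proof is correct. Note that the paper itself gives no argument for this statement: it is quoted as a classical result with a pointer to \cite[Theorem A]{Gol} (going back to Vogt, Fricke--Klein, and Horowitz), so there is no internal proof to compare against; what you have written is essentially the standard proof found in that cited literature. The Cayley--Hamilton identity $A+A^{-1}=\tr(A)\,Id$ yields the linearization $\tr(AB^{-1})=\tr(A)\tr(B)-\tr(AB)$ and the recursion $\tr(A^{n+1}B)=\tr(A)\,\tr(A^{n}B)-\tr(A^{n-1}B)$, and an induction on word length does the rest. Your two-stage organization --- first eliminating all inverses by the matrix identities $x^{-1}=s\,Id-x$, $y^{-1}=t\,Id-y$, so that by linearity of the trace only positive monomials remain, and only then running the length induction using cyclic invariance of the trace --- is a clean way around the termination issue you correctly flag (naive use of the linearization identity can lengthen words). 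Two small remarks. First, your description of the terminal words is slightly imprecise: $x(xy)^{k}$ is not cyclically alternating (cyclically it equals $x^{2}y(xy)^{k-1}$, which still contains a square), but this is harmless, since your recursion with $A=xy$, $B=x$ disposes of it, with base cases $\tr(x)=s$ and $\tr(x^{2}y)=su-t$; the cleaner statement is that the only cyclic words with all syllable exponents equal to $1$ are $(xy)^{k}$, and every other word admits a strictly length-decreasing reduction. Second, the closing well-definedness paragraph is not needed for the theorem as stated (existence of some $P_w$ with integer coefficients is exactly what the reduction produces, whatever order the identities are applied in), though it is correct and even slightly understated: the paper notes that $\Tr\colon G^2\to\BA^3_{s,t,u}$ is surjective, so $P_w$ is in fact unique.
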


 It follows from \thmref{Hor} that
the following diagram commutes:

\begin{equation} \label{traceeq:cd}
\begin{CD}
G^2   @>{w}>>  G  \\
@VTrVV @V\tr VV \\
\BA^3_{(s,t,u)} @>{P_w} >> \BA^1.
\end{CD}
\end{equation}
For  $G=\SL(2,\BC)$  the projection $\Tr\colon G\times G\to \BA^3_{s,t,u}$
is surjective.
Note that there are non-conjugate words with the same trace polynomials. For example,
$\tr a^2bab^{-1}=\tr a^2b^{-1}ab$ though theses words are not conjugate  (\cite{Ho}).
\begin{Example}\label{traceex:com} Let $w(x,y)=[x,y]=xyx^{-1}y^{-1}, \ s=\tr(x), t=\tr(y), u=\tr(xy)$. Then 
$$P_w(s,t,u)=s^2+t^2+u^2-stu-2:$$ 
 The trace  hypersurface
$$H_{w,\al}=\{\tr([x,y])=\al\}=\{s^2+t^2+u^2-stu-2=\al\}\subset \BA^3_{s,t,u}$$
is  irreducible for all $\al.$   If $\al^2\ne 4$,  it is smooth.\end{Example}

 Further on we will use the following notation:
 
{\bf Notation and Assumptions 2}\begin{enumerate}\item For $\al\in\BC$ and $P\in\BC[s,t,u]$
$$H_{P,\al}:=\{P(s,t,u)=\al\}\subset \BA^3_{s,t,u};$$
\item For $\al\in\BC$ and $w\in\CF_2$
$$H_{w,\al}:=H_{P_w,\al}=\{P_w(s,t,u)=\al\}\subset \BA^3_{s,t,u};$$
\item
$$J(s,t,u):=P_{[x,y]}(s,t,u)-2=s^2+t^2+u^2-stu-4=$$ $$\left(u-\frac{st}{2}\right)^2-\frac{(t^2-4)(s^2-4)}{4};$$
\item  $$M:=\{P_{[x,y]}=2\}=\{J(s,t,u)=0\};$$
\item    $$V=\BA^3_{s,t,u}\setminus M, \ \tilde V=\Tr^{-1}(V)\subset G\times G;$$
\item   $D_k(s):=D_{k}(s,1),  E_{k}(s):=E_{k}(s,1)$ are the Dickson polynomials of the first and the second kind, respectively (see \defnref{trace:dickson}).
\item 

$\phi_k(s)=E_{k+1}(s,1).$ 
\end{enumerate}

{\bf  Properties of the  trace map  on $\SL(2,\BC)$}\begin{enumerate}\item $\Tr(x,y)\in M$ if and only if   matrices $x,y$ have a common eigenvector. In particular,  there exist $z\in G$ such that both $zxz^{-1}$ and  $zyz^{-1}$ are upper triangular  (\cite[Theorem 2.9]{Fri83}).
We will use lower triangular matrices instead of upper ones. 
\item  If $w=x^ny^m ,  \  n,m\ne 0$   then \begin{equation}\label{traceeq:ab}\tr  w(x,y)= u\cdot
f_{n,m}(s,t)+h_{n,m}(s,t), \end{equation} where
$f_{n,m}(s,t), \ h_{n,m}(s,t)\in \BZ[s,t]$  and 
the highest degree summand of $f_{n,m}(s,t)$ (of degree $n+m-2$) is:
$\pm s^{n-1}t^{m-1}$ (\cite[Lemma 2.3]{BG})
\item
Assume that   $n_i,m_i\ne 0, \ w_i(x,y)=x^{n_i}y^{m_i}$  and  the word $$w=x^{n_1}y^{m_1}\dots
x^{n_k}y^{m_k}=w_1\dots w_k.$$  Then 
\begin{equation}\label{traceeq: word}
\tr(w)=P_w(s,t,u)=\sum\limits_{0}^k u^r G_r(s,t) \text{ and }
G_k(s,t)=\prod\limits_{i=1}^{k}f_{n_i,m_i}.\end{equation}
 (\cite[Lemma 2.3] {BG})
 \item Assume that $w\in[\CF_2,\CF_2].$ Then \begin{equation}\label{traceeq:comword}
P_w(s,t,u)-2=Q_w(s,t,u)J(s,t,u)\end{equation} for some polynomial $Q_w\in \BZ[s,t,u]$  (\cite[Lemma 3.1] {BKS}).
 \end{enumerate}

\begin{Definition}\label{trace:dickson}(\cite [Section 9.6]{MP}). For $k\in\BN$ the Dickson polynomials $D_k(s,a)$ and  $E_k(x,a), \ x,a\in\BC$ of the first and the second kind,
 respectively. are defined by the following recursive rule:
$$D_0(x,a)=2, \ D_1(x,a)=x, \ D_k(x,\al)=x D_{k-1}(x,a)- a D_{k-2}(x,a),$$
$$E_0(x,a)=1, \ E_1(x,a)=x, \ E_k(x,a)=x E_{k-1}(x,a)- a E_{k-2}(x,a).$$\end{Definition}

\begin{Proposition}\label{tracelem:ab}   For $k\in\BN, k>0$  denote   $\phi_k(s)=E_{k-1}(s,1).$ 
Assume  that $ n,m$ are non-zero integers. Then
\begin{enumerate}\item $f_{n,m}=\pm \phi_{n}(s)\phi_{m}(t),$ where $f_{n,m}$ are defined  in \eqnref{traceeq:ab};
\item if $w(x,y)=[x^n,y^m]$  then $Q_w=(\phi_{n}(s)\phi_{m}(t))^2,$ where $Q_w$ was  defined  in \eqnref{traceeq:comword}  .
\end{enumerate}\end{Proposition}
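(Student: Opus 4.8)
The plan is to reduce everything to the single Cayley--Hamilton identity for a matrix in $\SL(2,\BC)$ and the resulting closed form for its powers in terms of the Dickson polynomials of the second kind from \defnref{trace:dickson}. For part (1), I would first record that if $x\in\SL(2,\BC)$ has $\tr(x)=s$, then $x^2=sx-Id$, and hence by induction
\begin{equation*}
x^n=\phi_n(s)\,x-\phi_{n-1}(s)\,Id,\qquad \phi_n(s)=E_{n-1}(s,1),
\end{equation*}
the inductive step being exactly the recursion $E_k=sE_{k-1}-E_{k-2}$. For negative $n$ one uses $x^{-1}=sId-x$ together with the extension $E_{-k}(s,1)=-E_{k-2}(s,1)$, which propagates the formula to all nonzero integers with $\phi_{-n}=-\phi_n$. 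Writing the analogous formula for $y^m$ with $t=\tr(y)$, multiplying the two expressions, and taking traces, the coefficient of $u=\tr(xy)$ is precisely $\phi_n(s)\phi_m(t)$; comparison with \eqnref{traceeq:ab} then gives $f_{n,m}=\pm\,\phi_{|n|}(s)\phi_{|m|}(t)$, the sign being $\sg(n)\sg(m)$ and accounting for the $\pm$.

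For part (2) I would set $a=x^n$, $b=y^m$ and abbreviate $S=\tr(a)$, $T=\tr(b)$, $U=\tr(ab)$. Applying the commutator trace formula of \exampref{traceex:com} to the pair $(a,b)$ gives
\begin{equation*}
P_w(s,t,u)-2=S^2+T^2+U^2-STU-4=J(S,T,U).
\end{equation*}
In view of \eqnref{traceeq:comword} it therefore suffices to prove the single polynomial identity $J(S,T,U)=\big(\phi_n(s)\phi_m(t)\big)^2\,J(s,t,u)$, and I would do this using the factored form $J=(u-\tfrac{st}{2})^2-\tfrac14(s^2-4)(t^2-4)$ recorded in Notation and Assumptions 2(3). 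Writing the eigenvalues of $x$ as $\lambda,\lambda^{-1}$ so that $s=\lambda+\lambda^{-1}$ and $S=\lambda^n+\lambda^{-n}$, the generating identity $E_{n-1}(s,1)=(\lambda^n-\lambda^{-n})/(\lambda-\lambda^{-1})$ yields
\begin{equation*}
S^2-4=(\lambda^n-\lambda^{-n})^2=\phi_n(s)^2\,(s^2-4),
\end{equation*}
and likewise $T^2-4=\phi_m(t)^2(t^2-4)$. Second, using $\tr(x^n)=D_n(s,1)=s\phi_n(s)-2\phi_{n-1}(s)$ together with the explicit lower-order term $h_{n,m}$ produced when computing $U$ in part (1), a direct cancellation gives
\begin{equation*}
U-\tfrac{ST}{2}=\phi_n(s)\phi_m(t)\,\Big(u-\tfrac{st}{2}\Big).
\end{equation*}
Substituting both relations into the factored $J(S,T,U)$ lets $\phi_n(s)^2\phi_m(t)^2$ factor out cleanly, whence $J(S,T,U)=(\phi_n(s)\phi_m(t))^2\,J(s,t,u)$ and therefore $Q_w=(\phi_n(s)\phi_m(t))^2$.

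The conceptual heart, and the step I expect to require the most care, is this pair of identities. The relation $S^2-4=\phi_n(s)^2(s^2-4)$ is the bridge between the Dickson polynomials of the first kind, which govern $\tr(x^n)$, and those of the second kind, which govern $\phi_n$; it is what forces the discriminant-type term of $J$ to scale by exactly $f_{n,m}^2$. The cancellation $U-\tfrac{ST}{2}=f_{n,m}(u-\tfrac{st}{2})$ must be organized so that every $s$-only and $t$-only contribution coming from $h_{n,m}$ and from $\tfrac{ST}{2}$ annihilates, leaving only the multiple of $u-\tfrac{st}{2}$; this is the one place where the explicit form of $h_{n,m}$ and the relation $D_n=E_n-E_{n-2}=s\phi_n-2\phi_{n-1}$ must be combined carefully. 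Everything else is routine bookkeeping with the defining recursion of \defnref{trace:dickson}.
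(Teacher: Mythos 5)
Your proof is correct, but it takes a genuinely different route from the paper's in both parts. For part (1) the paper argues by induction directly on traces, splitting into four cases according to the signs of $n$ and $m$ and repeatedly using $\tr(xy)+\tr(xy^{-1})=\tr(x)\tr(y)$; you instead establish the matrix identity $x^n=\phi_n(s)\,x-\phi_{n-1}(s)\,Id$ (valid for every nonzero integer once one sets $\phi_{-k}=-\phi_k$, which your extension $E_{-k}(s,1)=-E_{k-2}(s,1)$ correctly encodes), multiply two such expressions and take a single trace. This removes the case analysis and, crucially for what follows, yields the explicit lower term $h_{n,m}=-s\,\phi_n\phi_{m-1}-t\,\phi_{n-1}\phi_m+2\phi_{n-1}\phi_{m-1}$. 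For part (2) the divergence is larger: the paper's proof is a comparison of leading coefficients in $u$, resting on two quoted facts --- the product formula for the top $u$-coefficient of $\tr(x^{n_1}y^{m_1}\cdots x^{n_k}y^{m_k})$ from \cite[Lemma 2.3]{BG}, and the existence of $Q_w$ with $P_w-2=Q_wJ$ from \cite[Lemma 3.1]{BKS} --- combined with the fact that $J$ is monic quadratic in $u$. You invoke neither citation: applying \exampref{traceex:com} to the pair $(x^n,y^m)$ and proving the two Dickson identities $D_n(s)^2-4=\phi_n(s)^2(s^2-4)$ and $U-\tfrac{ST}{2}=\phi_n(s)\phi_m(t)\bigl(u-\tfrac{st}{2}\bigr)$, you obtain the explicit factorization $P_w-2=J\bigl(D_n(s),D_m(t),U\bigr)=(\phi_n\phi_m)^2J(s,t,u)$. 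I checked the cancellation you flag as the delicate step: with $D_n(s)=s\phi_n-2\phi_{n-1}$, $D_m(t)=t\phi_m-2\phi_{m-1}$ and your $h_{n,m}$, the terms $-s\phi_n\phi_{m-1}$, $-t\phi_{n-1}\phi_m$ and $2\phi_{n-1}\phi_{m-1}$ cancel exactly against $\tfrac{ST}{2}$, leaving $\phi_n\phi_m\bigl(u-\tfrac{st}{2}\bigr)$, so the step is sound. What each approach buys: the paper's is shorter given the cited lemmas, since divisibility of $P_w-2$ by $J$ is already known for all words in $[\CF_2,\CF_2]$; yours is self-contained, reproves that divisibility for the commutator words at hand, and the identity $J(D_n(s),D_m(t),U)=(\phi_n\phi_m)^2J(s,t,u)$ is a cleaner structural statement than a leading-coefficient count.
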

\begin{proof} We use the following properties of  the trace on $G.$
\begin{equation}\label{traces}
\tr(x)=\tr(x^{-1}),    \                \tr(xy)=\tr(yx),  \ \tr(xy)+\tr(xy^{-1})=\tr(x)\tr(y).\end{equation}

 We prove (1) by induction.

{\bf  Case 1.}  $m=1, n>0, w(x,y)=x^{n}y.$  We have 

$\tr(xy)=u,$  thus $f_{1,1}(s,t)=\phi_1(s)=1.$

$\tr(x^2y)=su-t,$  thus $f_{2,1}(s,t)=\phi_2(s)=s.$

$\tr(x^ny)=s\tr(x^{n-1}y)-\tr(x^{n-2}y),$  thus $f_{n,1}(s,t)=sf_{n-1,1}(s,t)-f_{n-2,1}(s,t)=s\phi_{n-1}(s)-\phi_{n-2}(s)=\phi_n(s).$

{\bf  Case 2.}  $m=1, n>0, w=x^{-n}y.$ We have  

 $\tr(x^ny)+\tr(x^{-n}y)=t\tr(x^{n})=tD_n(s)$, thus   $f_{-n,1}=-f_{n,1}.$ 

{\bf  Case 3.}  $n\ne 0, m\ge 1, w=x^{n}y^m.$  We have 

 $f_{n,1}(s,t)=\sg(n)\phi_n(s)$ according to {\bf Cases 1, 2.}

$\tr(x^ny^2)=t\tr(x^ny)-tr(x^n), $  thus $f_{n,2}(s,t)=tf_{n,1}(s,t)=\sg(n)\phi_2(t)\phi_n(s).$

$\tr(x^ny^m)=t\tr(x^ny^{m-1})-\tr(x^{n}y^{m-2})$  thus 

$f_{n,m}(s,t)=\sg(n)(t\phi_n(s)\phi_{m-1}(t)-\phi_n(s)\phi_{m-2}(t))=\sg(n)\phi_n(s)\phi_{m}(t).$

{\bf  Case 4.}  $n\ne 0, m\ge 1, w=x^{n}y^{-m}.$  We have 

$\tr(x^ny^{-m})=\tr(y^mx^{-n})=\tr(x^{-n}y^{m})=\pm\phi_n(s)\phi_{m}(t).$

Proof of (2).  In notation of {\bf Properties}(3)(4),  we have 

$$ (\phi_n(s)\phi_m(t))^2u^2 +uG_1(s,t)+G_0(s,t)=Q_w(s,t,u)(u^2-ust+t^2+s^2-4),$$ 
hence
$$Q_w(s,t)=(\phi_n(s)\phi_m(t))^2.$$
\end{proof}
\begin{Remark}\label{tracerem:positive}
 Since   $  \tr(x^{-1})=\tr(x)=s, \ \tr(x^{-1}y^{-1})=\tr(xy)=u, \ \tr(xy^{-1})=st-u,$ and $J(s,t,u)=J(s,t,st-u),$  for any integers $n,m$ 
 $$\tr([x^n,y^m]=\tr([x^{|n|},y^{|m|}].$$ \end{Remark}

\begin{Corollary}\label{tracecor:rational}  Assume that  $n,m$ are  non-zero integers,  a complex number $\al\ne\pm 2,$ and  either $w(x,y)=x^ny^m$  or  $w(x,y)=[x^n,y].$ Then the trace hypersurface
$H_{w,\al}=\{P_{w}(s,t,u)=\al \}$ is rational.\end{Corollary}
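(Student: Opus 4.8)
The plan is to treat the two families of words separately, in each case exhibiting the trace hypersurface as a rational fibration over a line.

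For $w=x^ny^m$ I would use that the trace polynomial is affine-linear in $u$. By Property (2) of the trace map one has $P_w(s,t,u)=u\,f_{n,m}(s,t)+h_{n,m}(s,t)$, and by \propref{tracelem:ab}(1) the coefficient $f_{n,m}=\pm\phi_{|n|}(s)\phi_{|m|}(t)$ is a nonzero polynomial (its leading term is $\pm s^{|n|-1}t^{|m|-1}$). Hence on the open set $\{f_{n,m}\neq 0\}$ the equation $P_w=\al$ determines $u$ uniquely, namely $u=(\al-h_{n,m}(s,t))/f_{n,m}(s,t)$, so the projection $(s,t,u)\mapsto(s,t)$ restricts to a birational isomorphism from $H_{w,\al}$ onto $\BA^2_{s,t}$. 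Since the defining polynomial has degree one in $u$, the component dominating the $(s,t)$-plane is the relevant one, and this already shows $H_{w,\al}$ is rational.

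For $w=[x^n,y]$ I would exploit that $w\in[\CF_2,\CF_2]$; after replacing $n$ by $|n|$, which is harmless by \remarkref{tracerem:positive}, assume $n>0$. By Property (4) together with \propref{tracelem:ab}(2) specialized to $m=1$ (so that $\phi_1(t)=E_0(t,1)=1$), we get $P_w-2=\phi_{n}(s)^2\,J(s,t,u)$, whence
$$H_{w,\al}=\{\phi_{n}(s)^2\,J(s,t,u)=\al-2\}.$$
Because $\al\neq 2$ the right-hand side is nonzero, so $\phi_{n}(s)$ never vanishes on $H_{w,\al}$ and $s$ is a nonconstant function there. Completing the square by setting $v:=u-\tfrac{st}{2}$ and using the identity $J=v^2-\tfrac14(t^2-4)(s^2-4)$, the defining equation becomes
$$v^2-\tfrac14(s^2-4)\,t^2=\frac{\al-2}{\phi_{n}(s)^2}-(s^2-4).$$

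Finally I would read this equation as a conic bundle under the projection $(s,t,u)\mapsto s$. Its generic fibre is a plane conic over the field $\BC(s)$ whose quadratic part $v^2-\tfrac14(s^2-4)t^2$ is nondegenerate (since $s^2-4\neq 0$ in $\BC(s)$) and whose right-hand side is a nonzero element of $\BC(s)$ (its numerator $\al-2-(s^2-4)\phi_{n}(s)^2$ is a nonzero polynomial because $\al-2\neq 0$); hence the generic fibre is a smooth, geometrically irreducible conic. As $\BC(s)$ is a $C_1$ field, Tsen's theorem provides a $\BC(s)$-rational point on it, so this conic is isomorphic to $\BP^1_{\BC(s)}$ and its function field equals $\BC(s)(\tau)$ for some transcendental $\tau$. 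Consequently $\BC(H_{w,\al})=\BC(s,\tau)$ is purely transcendental over $\BC$, and $H_{w,\al}$ is rational. The one nonelementary point, and the step I expect to require the most care, is precisely this passage from the double cover structure — the equation being quadratic rather than linear in $u$ — to rationality: fibering over the $s$-line and invoking Tsen's theorem to produce a section is what trivializes the conic bundle birationally, and the verification that the generic fibre is a genuinely smooth conic (nonvanishing of both the discriminant of the quadratic part and of the right-hand side) must be checked to make this legitimate.
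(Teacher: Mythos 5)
Your proposal is correct, and for $w=x^ny^m$ it coincides with the paper's proof: both solve the equation, which is linear in $u$ with coefficient $\pm\phi_n(s)\phi_m(t)\not\equiv 0$, for $u$, obtaining a birational map between $H_{w,\al}$ and $\BA^2_{s,t}$. For $w=[x^n,y]$ the overall strategy is again the same --- rewrite the equation as $\phi_n(s)^2J(s,t,u)=\al-2$ and fiber over the $s$-line, reducing to the birational trivialization of a conic fibration --- but your trivialization step is genuinely different. The paper is constructive: the function $r$ in \eqnref{trace:3} is stereographic projection of the fiber conic from the explicit $\BC(s)$-point $(t,u)=\bigl(2,\,s+\be/\phi_n(s)\bigr)$ (with $\be^2=\al-2$), and inverting it yields explicit formulas for a birational map $\BA^2_{r,s}\dashrightarrow H_{w,\al}$. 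You instead verify that the generic fiber is a smooth conic over $\BC(s)$ (nondegeneracy of the diagonal form with entries $1$, $-\tfrac14(s^2-4)$, $-c$) and invoke Tsen's theorem to produce a rational point abstractly; a smooth conic with a point is $\BP^1$, so $\BC(H_{w,\al})=\BC(s)(\tau)$ is purely transcendental. Your route is shorter, needs no cleverness in locating a point, and generalizes to any conic fibration over a rational curve; the paper's route stays entirely elementary (no $C_1$-field input) and produces an explicit parametrization, which is in the spirit of its other computations. One small repair: your claim that the numerator $\al-2-(s^2-4)\phi_n(s)^2$ is a nonzero polynomial is true, but not ``because $\al-2\neq 0$''; it is nonzero because $(s^2-4)\phi_n(s)^2$ has positive degree $2|n|$ and hence cannot be identically equal to the constant $\al-2$. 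Since nonvanishing of this term is exactly what guarantees smoothness of the generic fiber (the step your argument hinges on, as you yourself note), the justification should be stated correctly.
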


\begin{proof}  (1)   Assume that $w(x,y)=x^ny^m.$  Then 
$$H_{w,\al}=\{\epsilon\phi_n(s)\phi_{m}(t)u+h_{n,m}(s,t)=\al\},$$  where $\epsilon=\pm 1.$  The
map $$(s,t)\mapsto(s,t,u=\epsilon\frac{\al-h_{n,m}(s,t)}{\phi_n(s)\phi_{m}(t)})$$
establishes the needed birational  map from $\BA_{s,t}$ to $H_{w,\al}.$

(2) Assume that $w(x,y)=[x^n,y].$ Then 
\begin{equation}\label{trace:1}
H_{w,\al}=\{(\phi_n(s))^2J(s,t,u)=\be^2\},\end{equation}  where we define $\beta$ by $\beta^2=\al-2.$

Since $\phi_n(s)$ does not vanish on  $H_{w,\al}$ we rewrite \eqnref{trace:1} as \begin{equation}\label{trace:2}
(u-\frac{st}{2})^2-\frac{(t^2-4)(s^2-4)}{4}-(\frac{\be}{\phi_n(s)})^2=0;\end{equation}.

Define 
\begin{equation}\label{trace:3}
r(s,t,u)=2\frac{u-\frac{st}{2}-\frac{\be}{\phi_n(s)}}{(t-2)(s-2)}=
\frac{1}{2}\frac{(t+2)(s+2)}{u-\frac{st}{2}+\frac{\be}{\phi_n(s)}}\end{equation}.

 Fix values of $s\ne\pm 2$ and $r\ne 0.$  Then 
\eqnref{trace:3} gives 
 $$t=\frac{4\frac{\be}{\phi_n(s)}-2(\frac{s+2}{r}+r(s-2))}{\frac{s+2}{r}-r(s-2)}$$
$$4(u-st)=r(t-2)(s-2)+\frac{(t+2)(s+2)}{r}.$$

This  defines a birational map $\BA^2_{r,s}$ to $H_{w,\al}.$
\end{proof}

  For $n\ge 1, m\ge 1$ let 
 $$A_n=\{s | (s^2-4)\phi_n'(s)+s\phi_n(s)=0\}\subset\BC$$

$$B_{n,m}=\{\al\in\BC \ | \ \al-2=-(\phi_n(s_i)\phi_m(t_j))^2\frac{(s_i^2-4)(t_j^2-4)}{4}, s_i\in   A_n, \ t_j \in A_m\}$$
 These  sets    are  finite, $A_n$  and $B_{n,m}$  contain at most $n+1$ and   $(n+1)(m+1)$  elements, respectively.

 \begin{Lemma}\label{trace:singular}   Let  $\al\ne\pm 2$ be a  complex number, and $n,m$ be non-zero integers, and 
 $w(x,y)=[x^n,y^m].$ 
 Then \begin{itemize}\item if  the trace surface  $H:=H_{w,\al}$  has singular points, then $\al\in  B_{n,m};$
\item the set of singular points  of $H$ is  either empty or finite;
 \end{itemize}
 \end{Lemma}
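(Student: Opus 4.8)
The plan is to realize $H$ explicitly as a hypersurface and to pin down its singular points via the Jacobian criterion. By \remarkref{tracerem:positive} the trace polynomial $P_w$ depends only on $|n|$ and $|m|$, so I will assume $n,m\ge 1$ throughout, which is exactly the range in which $A_n$ and $B_{n,m}$ are defined. Since $w=[x^n,y^m]\in[\CF_2,\CF_2]$, combining \eqnref{traceeq:comword} with \propref{tracelem:ab}(2) yields $P_w-2=(\phi_n(s)\phi_m(t))^2\,J(s,t,u)$. Writing $g:=\phi_n(s)\phi_m(t)$ and $F:=g^2J-(\al-2)$, the surface is precisely $H=\{F=0\}$, and a singular point is a common zero of $F,F_s,F_t,F_u$.

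The observation that drives everything is that on $H$ both $g\neq 0$ and $J\neq 0$: indeed $g^2J=\al-2\neq 0$ because $\al\neq 2$. Now $F_u=g^2(2u-st)$, so on $H$ the equation $F_u=0$ forces $u=st/2$. At such a point the completed-square form of $J$ gives $J=-\tfrac14(s^2-4)(t^2-4)$, together with $J_s=-\tfrac12 s(t^2-4)$ and $J_t=-\tfrac12 t(s^2-4)$. Since $J\neq 0$, this automatically rules out $s^2=4$ and $t^2=4$.

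Next I compute $F_s=\phi_m(t)^2\,\phi_n(s)\bigl[2\phi_n'(s)J+\phi_n(s)J_s\bigr]$, and because $g\neq 0$ the bracketed factor must vanish. Substituting the values above and extracting the nonzero factor $-\tfrac12(t^2-4)$ leaves $(s^2-4)\phi_n'(s)+s\phi_n(s)=0$, i.e. $s\in A_n$; the symmetric computation with $F_t$ gives $t\in A_m$. Feeding $s\in A_n$, $t\in A_m$, $u=st/2$ back into $F=0$ produces $\al-2=-(\phi_n(s)\phi_m(t))^2\tfrac14(s^2-4)(t^2-4)$, which is exactly the membership condition $\al\in B_{n,m}$, establishing the first bullet. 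For the second bullet, the same argument confines the singular locus to the finite set $\{(s,t,st/2):s\in A_n,\ t\in A_m\}$, so it is finite (possibly empty).

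The computation is elementary once the two structural facts $g\neq 0$ and $J\neq 0$ on $H$ are in hand; these are precisely what eliminate the degenerate branches $t^2=4$ and $s^2=4$ and let the conditions $s\in A_n$, $t\in A_m$ emerge cleanly. The only steps needing a little care are matching the factored expression $(s^2-4)\phi_n'(s)+s\phi_n(s)$ with the defining equation of $A_n$ and justifying the reduction to $n,m\ge 1$ via \remarkref{tracerem:positive}; neither constitutes a genuine obstacle.
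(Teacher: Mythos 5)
Your proof is correct and is essentially the paper's own argument: both apply the Jacobian criterion to $P_w-\al=(\phi_n(s)\phi_m(t))^2J(s,t,u)-(\al-2)$, use the nonvanishing of $\phi_n(s)\phi_m(t)$ and of $J$ on $H$ to get $2u=st$ from the $u$-derivative and then $s\in A_n$, $t\in A_m$ from the $s$- and $t$-derivatives, and conclude $\al\in B_{n,m}$ with finiteness of the singular locus. The only (harmless) addition is your explicit reduction to $n,m\ge 1$ via \remarkref{tracerem:positive}, which the paper leaves implicit.
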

 \begin{proof}

 Let $P=P_w$ and  let $(s_0,u_0,t_0)$ be a singular point of $H.$ Then  the partial derivatives $P_s,P_t  ,P_u$
vanish at this point. The direct computation shows that
$$P_u=(\phi_n(s)\phi_m(t))^2(2u-st).$$  Since $\phi_n(s)\phi_m(t)$ does not vanish on $H$, we have
\begin{equation}\label{trace:derivativeu}2u_0-s_0t_0=0,  \  J(s_0,u_0,t_0) =-\frac{(s_0^2-4)(t_0^2-4)}{4}.\end{equation}

 Computing $ P_s,$ and using \eqnref{trace:derivativeu}   one gets:
$$(\phi_m(t_0))^2[2\phi_n(s_0)\phi_n' (s_0)J((s_0,u_0,t_0) +\phi_n(s_0)^2(2s_0-u_0t_0)]=$$ 
 $$(\phi_m(t_0))^2\phi_n(s_0)[2\phi_n' (s_0)(-\frac{(s_0^2-4)(t_0^2-4)}{4})+\phi_n(s_0)(2s_0)(1-\frac {t_0^2}{4})]=$$  $$-\frac{(t_0^2-4)}{2}(\phi_m(t_0))^2\phi_n(s_0)[\phi_n' (s_0)(s_0^2-4)+\phi_n(s_0)(s_0)]=0\Longrightarrow $$ $$ \phi_n' (s_0)(s_0^2-4)+s_0\phi_n(s_0)=0.$$
 Note that    $ (t_0^2-4)\ne 0$ since $J(s_0,t_0,u_0)\ne 0.$
 
 Similarly, one gets:$$\phi_m' (t_0)(t_0^2-4)+t_0\phi_n(t_0)=0.$$  It follows that $s_0\in A_n, t_0\in A_m,$  and
$$\al-2=(\phi_m(t_0))^2\phi_n(s_0)^2J(s_0,u_0,t_0)\in B_{n,m}.$$

 Since the sets 
$A_n$ and $A_m$ are finite, the set of singular points on $H$ is finite as well.\end{proof}

\begin{Proposition}\label{traceprop:irreducible} 
Let  $\al\ne\pm 2$ be a  complex number, and $n,m$ non-zero integers,  $w(x,y)=[x^n,y^m].$ 
 Then  the trace surface  $H:=H_{w,\al}$ is irreducible.\end{Proposition}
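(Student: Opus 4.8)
The plan is to reduce the statement to the irreducibility of a single polynomial, and then to a non-squareness assertion about a polynomial in two variables. By \remarkref{tracerem:positive} we may assume $n,m\ge 1$. Combining \propref{tracelem:ab}(2) with \eqnref{traceeq:comword} gives $P_w(s,t,u)-2=(\phi_n(s)\phi_m(t))^2\,J(s,t,u)$, so, writing $g:=\phi_n(s)\phi_m(t)$ and letting $\be$ be a square root of $\al-2$ (note $\be\ne 0$ since $\al\ne 2$), the surface is $H=\{F=0\}$ with
\[
F:=g^2 J(s,t,u)-(\al-2)=g^2u^2-g^2st\,u+\bigl(g^2(s^2+t^2-4)-(\al-2)\bigr)\in\BC[s,t][u].
\]
It suffices to prove that $F$ is irreducible in $\BC[s,t,u]$, for then $(F)$ is prime and $H=V(F)$ is integral. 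I regard $F$ as a quadratic in $u$ over $\BC[s,t]$; its leading coefficient $g^2$ is not identically zero, so $\deg_uF=2$, and the projection $(s,t,u)\mapsto(s,t)$ exhibits $H$ as the conic bundle of \secref{conic}.

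First I would check that $F$ is primitive as an element of $\BC[s,t][u]$: any nonconstant common divisor of its three coefficients would divide $g^2$, hence $g$, and therefore — reading off the $u$-free coefficient — would divide the nonzero constant $\al-2$, which is impossible. By Gauss's lemma $F$ is then irreducible in $\BC[s,t,u]$ precisely when it is irreducible in $\BC(s,t)[u]$; since $\deg_u F=2$, this happens exactly when $F$ has no root in $\BC(s,t)$, i.e. when the $u$-discriminant of $F$ is a non-square in $\BC(s,t)$. A direct computation gives
\[
\Delta=g^4(s^2-4)(t^2-4)+4g^2(\al-2)=4g^2\Bigl(g^2\tfrac{(s^2-4)(t^2-4)}{4}+(\al-2)\Bigr)=4g^2N,
\]
where $N:=g^2\frac{(s^2-4)(t^2-4)}{4}+(\al-2)$. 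As $4g^2$ is a square, $\Delta$ is a square in $\BC(s,t)$ if and only if $N$ is; and since $N\in\BC[s,t]$ and $\BC[s,t]$ is a UFD, $N$ is a square in $\BC(s,t)$ if and only if it is a square in $\BC[s,t]$. Thus the whole statement reduces to showing that $N$ is \emph{not} a perfect square in $\BC[s,t]$.

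This last point is the heart of the argument. Write $N=A(s)B(t)+\be^2$ with $A(s)=\frac{s^2-4}{4}\phi_n(s)^2$ and $B(t)=(t^2-4)\phi_m(t)^2$, both nonconstant (of degrees $2n\ge2$ and $2m\ge2$). Suppose $N=G^2$ for some $G\in\BC[s,t]$. Then $(G-\be)(G+\be)=A(s)B(t)$, and $G-\be$, $G+\be$ are coprime because their difference is the nonzero constant $2\be$. Since $A(s)B(t)$ is a product of univariate linear factors (each lying in $\BC[s]$ or in $\BC[t]$), unique factorization forces
\[
G-\be=P_1(s)Q_1(t),\qquad G+\be=P_2(s)Q_2(t),
\]
so that $P_2(s)Q_2(t)-P_1(s)Q_1(t)=2\be$ is constant. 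Differentiating this identity in $s$ and in $t$ and using that $A$ and $B$ are nonconstant, I would first conclude that all four factors $P_1,P_2,Q_1,Q_2$ are nonconstant; the resulting separation of variables then yields $P_2=\nu P_1$ and $Q_1=\nu Q_2+d$ for constants $\nu,d$, and substituting back gives $-dP_1=2\be$, forcing $P_1$ to be constant — a contradiction. Hence $N$ is not a square, $F$ is irreducible, and $H$ is irreducible.

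The discriminant computation and the Gauss/UFD reductions are routine; the real obstacle is the non-squareness of $N$, that is, the rigidity of an expression of the shape $A(s)B(t)+\text{const}$ with $A,B$ nonconstant and the constant nonzero. It is exactly here that the hypothesis $\al\ne 2$ enters (through $\be\ne0$), and one should keep in mind the degenerate cases $n=1$ or $m=1$, where $\phi_1\equiv1$; these cause no trouble because the factors $s^2-4$ and $t^2-4$ already make $A$ and $B$ nonconstant.
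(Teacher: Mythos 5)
Your proposal is correct, and it takes a genuinely different route from the paper's. Both proofs start from the same structural observation: the defining equation of $H$ is quadratic in $u$ with leading coefficient $g^2=(\phi_n(s)\phi_m(t))^2$, so $H$ is a double cover of $U=\{g\ne 0\}\subset\BA^2_{s,t}$ and has at most two irreducible components. From there the paper argues geometrically: if there were two components, finiteness of the projection would force each to surject onto $U$, so they would have to meet along the curve $C=\{2u-st=0\}\cap H$, producing infinitely many singular points and contradicting \lemref{trace:singular}; the computational work of that route sits in the proof of that lemma. You argue algebraically: after the primitivity/Gauss reduction, reducibility of $F=P_w-\al$ is equivalent to the $u$-discriminant $4g^2N$, hence to $N=A(s)B(t)+\be^2$, being a square in $\BC[s,t]$, and you exclude this via the coprime factorization $(G-\be)(G+\be)=A(s)B(t)$ into univariate factors plus separation of variables. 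Your key lemma (non-squareness of $A(s)B(t)+\mathrm{const}$ for nonconstant $A,B$ and a nonzero constant) has no counterpart in the paper, and you never use \lemref{trace:singular}. As for what each approach buys: yours is elementary and self-contained, proves the formally stronger fact that the polynomial $P_w-\al$ itself is irreducible (so the ideal of $H$ is prime), and makes explicit a point the paper leaves tacit --- that the branch curve $\{N=0\}$ actually meets $U$ (immediate in your setup, since $N\equiv\be^2\ne0$ on $\{g=0\}$ while $N$ is nonconstant), without which the two hypothetical components would not be forced to meet along $C$. The paper's proof, in turn, is shorter in context because \lemref{trace:singular} is needed elsewhere anyway and gets reused here. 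Two small repairs to your write-up: in the primitivity check, pass to a \emph{prime} common divisor (a divisor of $g^2$ need not divide $g$ in general); and the assertion that $P_1,P_2,Q_1,Q_2$ are all nonconstant follows cleanly by differentiating your identity in $s$, giving $P_2'Q_2=P_1'Q_1$: if $P_1'=0$ then $P_2'=0$ (as $Q_2\ne0$), contradicting $\deg(P_1P_2)=\deg A\ge 2$; symmetrically in $t$ for the $Q_i$.
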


\begin{proof}  

 The equation of $H$ implies that  
  on $H$ 
   $$(\phi_n(s)\phi_m(t))^2(2u-st)^2=4(\al-2) +(\phi_n(s)\phi_m(t))^2(t^2-4)(s^2-4),$$
 i.e.,   $H$ is a double  cover of an open subset $U:=\{\phi_n(s)\phi_m(t)\ne 0\}$ of an affine plane
 $\BA_{s,t}.$   Moreover, since every irreducible component of $H$ has to have dimension 2, and the map is finite,
there may be at most  two irreducible components. If there are two of them, then  both  are  mapped 
on $U$  dominantly and  birationally.
These components should  meet at points of the curve $$ C:=\{(2u-st=0\}\cap H$$ 
   which would imply that $H$ has infinite set of singular points. But this contradicts to \lemref{trace:singular}.
   Thus $H$ is irreducible.
\end{proof}

 \begin{Remark}\label{kodaira-inequalities}
 We will use the following properties of the  logarithmic Kodaira dimension of a quasiprojective manifold  $W$.  \begin{enumerate}\item  Assume that $f:V\to W$ is a dominant morphism of affine  varieties. 
Let $F$ be an irreducible component  of the (sufficiently) general fiber of $f.$
Then \begin{itemize}\item 
 \begin{equation}\label{trace:101}
 \ov{\kappa}(V)\le \ov{\kappa}(F)+\dim W;
\end{equation}
( {\sl the easy Iitaka inequality},   (\cite[Theorem 4]{Iit77}). 
\item   \begin{equation}\label{intro:102}
 \ov{\kappa}(V)\ge \ov{\kappa}(F)+ \ov{\kappa} (W).
\end{equation} 
(subadditivity of the logarithmic Kodaira dimension,  the inequality due to Osamu Fujino\cite[Corollary 5.3.2]{Fuj}).
 In our case the general fibers (over an open subset of $W$)  are irreducible and have the same log-Kodaira dimension, thus are ``sufficiently'' general.
\end{itemize}

\item      If $W$ is a quasiprojective complex manifold   and $\dim W=\ov \kappa(W)$ then the group $\Aut(W)$ is finite  (\cite[Theorem 5.2]{Sakai} ).   

If $W$ is singular, this property still holds since singular locus is invariant under any automorphism.

\item  Assume that we have a surjective morphism $f$   from an affine surface $V $ to a smooth affine surface $W$. Then $\ov \kappa(V)\ge\ov \kappa(W).$ provided $\kappa(W)\ge 0$ (\cite[Theorem 14]{Iit78}).
 This fact follows directly also from \cite[Equation (R)]{Iit77}.
\item  Let  $W$ is an algebraic variety equipped with the action of a connected algebraic group,  let  $O_p$ be an orbit of a general point $p\in W$.
Then   \begin{equation}\label{Iitaka1}
 \ov{\kappa}(W)\le \ov{\kappa}(O_p)+\dim (W)-\dim(O_p).
\end{equation} 
(\cite[Lemma 2]{Iit77}).
\end{enumerate}

\end{Remark}

\begin{Proposition}\label{traceprop:kappa} 
Let  $\al\ne\pm 2$ be a  complex number, and $n,m$ non-zero integers, $w(x,y)=[x^n,y^m].$  
Then for the logarithmic Kodaira dimension $\ov{\kappa}(H)$
of the trace surface  $H:=H_{w,\al}$ the following hold:\begin{enumerate}
\item if  $(n,m)=(\pm 1,\pm 1), $ then $\ov{\kappa}(H)=0;$
\item   $\ov{\kappa}(H)\ge 0;$

\item   if  $2<|n|$  and $|m|=1$ or  $2<|m|$  and $|n|=1,$ then $\ov{\kappa} (H)\ge 1 ;$ 
\item if $2<|n|$  and $2<|m|,$ then 
$\ov{\kappa}(H)=2.$

\end{enumerate}\end{Proposition}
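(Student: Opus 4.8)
The plan is to reduce everything to an explicit double cover and then read off $\ov{\kappa}$ case by case from the inequalities collected in \remarkref{kodaira-inequalities}. By \remarkref{tracerem:positive} I may assume $n,m\ge 1$, and I put $\be^2=\al-2\ne 0$. Combining \propref{tracelem:ab}(2) with \eqnref{traceeq:comword} and completing the square in $J$ presents $H$ as $\{(\phi_n(s)\phi_m(t))^2 J=\be^2\}$ with $J=(u-\tfrac{st}{2})^2-\tfrac{(s^2-4)(t^2-4)}{4}$. Setting $v=u-\tfrac{st}{2}$ and making the birational substitution $W=\phi_n(s)\phi_m(t)\,v$ (licit since $\phi_n(s)\phi_m(t)$ is nowhere zero on $H$), I obtain that $H$ is birational to the affine double cover $X\colon W^2=\be^2+\tfrac14 R(s)T(t)$ of $\BA^2_{s,t}$, where $R=\phi_n(s)^2(s^2-4)$ and $T=\phi_m(t)^2(t^2-4)$ have degrees $2n$ and $2m$. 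Since $\ov{\kappa}$ is a birational invariant I may compute on $X$, using the two projections $X\to\BA^1_s$, $X\to\BA^1_t$ together with a compactification over $\BP^1_s\times\BP^1_t$.

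For the universal bound (2) I would bypass any compactification: $(\phi_n(s),\phi_m(t))$ is a surjective morphism from $H$ onto the smooth affine surface $\BC^*\times\BC^*$, which has $\ov{\kappa}=0$, so part (3) of \remarkref{kodaira-inequalities} gives $\ov{\kappa}(H)\ge 0$ at once. For $(n,m)=(\pm 1,\pm 1)$ in (1), $H$ is the affine Markov/Cayley cubic $\{s^2+t^2+u^2-stu=\al+2\}$; its closure in $\BP^3$ meets the plane at infinity in the triangle of lines $\{STU=0\}$, an anticanonical cycle, so the pair is log Calabi--Yau and $\ov{\kappa}(H)=0$. Together with (2) this pins down the value $0$.

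For the upper range I would pass to the double cover $\pi\colon\ov{X}\to\BP^1_s\times\BP^1_t$ branched along the closure of $\{R(s)T(t)=-4\be^2\}$, a curve of bidegree $(2n,2m)$; on a model where this curve is smooth the standard formula gives $K_{\ov X}=\pi^*\mathcal{O}(n-2,m-2)$. If $2<n$ and $2<m$ this bundle is ample, so $K_{\ov X}$ is nef and big, $\ov X$ is of general type, and since $\ov{\kappa}(X)\ge\kappa(\ov X)=2$ while trivially $\ov{\kappa}(X)\le 2$, assertion (4) follows. When instead $m=1$ (the case $n=1$ symmetric), $H$ is rational by \corref{tracecor:rational}, so the compactification satisfies $\kappa(\ov X)=-\infty$ and all positivity must come from the boundary. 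Here I would first use the conic bundle $X\to\BA^1_s$, whose general fibre is a smooth affine conic $\cong\BC^*$ with $\ov{\kappa}=0$, to get $\ov{\kappa}(X)\le 0+1=1$ from the easy Iitaka inequality \eqnref{trace:101}. For the matching lower bound I would use the other projection $X\to\BA^1_t$, whose general fibre $C$ is the affine curve $W^2=\be^2+\tfrac{t_0^2-4}{4}R(s)$ of genus $n-1\ge 2$, and note that the horizontal boundary over $\{s=\infty\}$ is a multisection meeting every fibre in two points. Then $(K_{\ov X}+\partial)|_C=K_C+(\text{2 points})$ has degree $2n-2>0$, so the logarithmic relative canonical is $\pi_t$-big; combined with the nefness of $\omega_{\ov X/\BP^1_t}$ (so that $\deg (\pi_t)_*\omega_{\ov X/\BP^1_t}^{\otimes m}$ grows) this forces $\ov{\kappa}(X)\ge 1$, giving assertion (3).

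The main obstacle is controlling the degenerations hidden in the phrase ``on a model where the branch curve is smooth.'' The branch curve acquires singularities only at points with $R'(s)=T'(t)=0$ lying on it, which by \lemref{trace:singular} occur exactly for the finitely many $\al\in B_{n,m}$, and there $H$ itself is singular; I would check that the induced singularities of the double cover are rational double points, so that a minimal resolution is crepant and leaves $\kappa(\ov X)$ unchanged, and that the cover is unramified along the two lines at infinity (both bidegrees being even), so that $\partial$ enters only effectively. The genuinely delicate case is (3): since $\ov X$ is rational, every bit of positivity must be extracted from the logarithmic boundary through the relative‑canonical positivity of the genus $\ge 2$ fibration, and verifying that this fibration is non‑isotrivial (equivalently $\deg(\pi_t)_*\omega_{\ov X/\BP^1_t}>0$) is where the real work lies. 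The finiteness of $A_n$ and $B_{n,m}$ is exactly what makes these finitely many degenerate configurations tractable.
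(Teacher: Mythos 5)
Your reduction to the double cover $X\colon W^2=\be^2+\tfrac14R(s)T(t)$ is a reasonable start, but two of the four assertions rest on steps that fail or are missing. For (2), the map $(\phi_n(s),\phi_m(t))\colon H\to\BC^*\times\BC^*$ is \emph{not} surjective (not even dominant) when $|n|=1$ or $|m|=1$, because $\phi_1=E_0\equiv 1$ is constant; so for $w=[x^{\pm2},y^{\pm1}]$ your proposal proves nothing at all (your (1) needs $|n|=|m|=1$ and your (3) needs $|n|>2$), and for $w=[x^n,y^{\pm1}]$ with $|n|>2$ assertion (2) rests entirely on your (3). But (3) is precisely where the second, deeper gap sits. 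You fiber over $\BA^1_t$ (or $\BP^1_t$), whose log Kodaira dimension is $-\infty$, so none of the inequalities of \remarkref{kodaira-inequalities} applies, and the positivity you invoke instead is not a theorem: a fibration with fibers of genus $\ge 2$ and nef relative dualizing sheaf over $\BP^1$ does \emph{not} force $\kappa\ge 1$ (Lefschetz pencils give rational surfaces with non-isotrivial genus $\ge 2$ fibrations; the term $\pi^*K_{\BP^1}=-2F$ eats the fiberwise positivity, and your boundary only returns the single fiber $F_\infty$, leaving you to produce sections of multiples of $(K_{\ov X/\BP^1_t}+\partial_{hor})-F$). Converting non-isotriviality plus semipositivity of direct images into $\ov{\kappa}\ge 1$ is essentially a log version of Viehweg's $C^+_{2,1}$, machinery far beyond the cited tools, and you explicitly defer the non-isotriviality check as ``where the real work lies.'' The paper gets (3) in three lines by projecting $H$ itself to the $s$-line: $s$ omits the $n-1\ge 2$ roots of $\phi_n$ and $\infty$, so the image has $\ov{\kappa}=1$, the fibers are conics with two punctures ($\ov{\kappa}=0$), and Fujino's subadditivity (\eqnref{intro:102}) gives $\ov{\kappa}(H)\ge 1$. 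Note this easy argument is unavailable on your model $X$, since $X$ fills back in the divisors over $\{\phi_n(s)=0\}$ (the curves $\{\phi_n(s)=0,\,W=\pm\be\}$ lie on $X$), so the coordinate $s$ no longer omits any values there.

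Two further defects. First, ``since $\ov{\kappa}$ is a birational invariant I may compute on $X$'' is false for open varieties ($\BA^2$ and $\BC^*\times\BC^*$ are birational but have different $\ov{\kappa}$); what is true is that your substitution identifies $H$ with the open subset $\{\phi_n(s)\phi_m(t)\ne 0\}\subset X$, whence $\ov{\kappa}(H)\ge\ov{\kappa}(X)$ — lower bounds transfer, upper bounds do not, so your ``matching'' upper bound in (3) and the invariance claim are unjustified as written (harmless for the statement, but symptomatic). Second, in (4) your control of the degenerations is aimed at the wrong locus: the closure of the branch curve in $\BP^1\times\BP^1$ is singular for \emph{every} $\al$, not only for $\al\in B_{n,m}$. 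In the coordinate $\sigma=1/s$ at a point $(\infty,t_0)$ with $\phi_m(t_0)=0$, the local equation is $\be^2\sigma^{2n}+(\mathrm{unit})\cdot(t-t_0)^2=0$, an $A_{2n-1}$ tacnode (and symmetrically over the roots of $\phi_n$ along $\{t=\infty\}$); \lemref{trace:singular} only governs the affine part. These particular singularities do produce Du Val points on the double cover, so a crepant resolution preserves your formula $K=\pi^*\CO(n-2,m-2)$ and (4) is repairable along your lines, but the verification you promise must address these points at infinity, not merely the finitely many bad $\al$. Compare with the paper, where (4) is immediate: $H$ maps dominantly and generically finitely onto $N=\{\phi_n(s)\phi_m(t)\ne 0\}\subset\BA^2_{s,t}$, which has $\ov{\kappa}=2$; and (2) is proved uniformly (including $|n|=1$ or $|m|=1$) by the generically finite map $(s,t,u)\mapsto(D_n(s),D_m(t),P_{n,m}(s,t,u))$ onto $H_{1,1}$ together with monotonicity of $\ov{\kappa}$.
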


\begin{proof} Thanks to \remarkref{tracerem:positive} we may  assume that $n>0,$ $m>0$  and $n\ge m.$

{\bf Case 1.} $(n,m)=( 1,1) .$  In this case the projective closure $\ov H$ of $H$ in the projective space $\BP^3_{\tilde s:\tilde t:\tilde u:z}$  where $s=\frac{\tilde s}{z},t=\frac{\tilde t}{z},u=\frac{\tilde u}{z}$,   is 
$$\ov H=
(z(\tilde s^2+\tilde t^2+\tilde u^2)-\tilde s\tilde t\tilde u-2z^3=\al z^{3}\}.$$
  $\ov H$ is a smooth projective surface and $H=\ov H\setminus D,$ where the divisor
$D=\{z=0\}\cap\ov H=\{\tilde s\tilde t\tilde u=0\}$  has strictly normal crossings and $K(\ov H)+D\sim 0.$ Hence $\ov{\kappa}(H)=0.$

{\bf Case 2.}   Denote the surface considered in the previous item by $H_{1,1}$
For any $(n,m)$ there is a generically finite  dominant map $\Phi_{n,m}:H\to H_{1,1}$  defined by 
$$\Phi_{n,m}(s,t,u)=(D_n(s),D_m(t), P_{n,m}(s,t,u)),$$
where 
\begin{itemize}\item
$D_k(s)$ is the  $k^{th}$ Dickson polynomial of the first kind (the trace  polynomial of the word  $w(x,y)=x^n$);
\item $P_{n,m}(s,t,u)$ is the trace  polynomial of the word   $w=x^ny^m.$\end{itemize} 

 Since  logarithmic Kodaira dimension does not 
 increase under generically finite morphism (see item 3 of \remarkref{kodaira-inequalities}), we have $$\ov{\kappa}(H)\ge\ov{\kappa}(H_{1,1})= 0.$$

{\bf Case 3.}  Consider the function  $ p_s:H\to \BC, \ (s,t,u)\mapsto s.$ If $n>2$ then it omits   infinity and $n-1>1$ finite  values : the roots of $\phi_n.$ Thus  $\ov\kappa( pr_s(H))=1.$

 For a general  $a$ the  fiber $p_s^{-1}(a)$ is a plane curve $$C_a=\{(\phi_n(a))^2(u^2-aut+t^2+a^2-4)=\al-2\}$$ is smooth irreducible, and  has  two punctures.  Hence, $\ov{\kappa}(C_a)\ge 0.$
  The inequalities from \remarkref{kodaira-inequalities}
   imply:
 \begin{equation}\label{trace:1010}
 \ov{\kappa}(H)\le \ov{\kappa}(C_a)+\dim pr_s(H)=\ov{\kappa}(C_a)+1,
\end{equation} 
  \begin{equation}\label{trace:102}
 \ov{\kappa}(H)\ge \ov{\kappa}(C_a)+ \ov{\kappa} (pr_s(H))= \ov{\kappa}(C_a)+ 1.
\end{equation}

{\bf Case 4.}   Let $N:=\BC_{s,t}\setminus \{\phi_n(s)\phi_m(t)=0\}.$ Let $pr_s:N\to  \BC_s$ be the projection  to $\BC_s.$ Then the image is $\BP^1$ with $n$ punctures and the fiber is $\BP^1$ with $m$ punctures, thus the inequalities from \remarkref{kodaira-inequalities} give $\ov{\kappa}(N)=2.$
Consider the projection $pr:H\to\BC_{s,t},  \ \ pr(s,t,u)=(s,t).$ The map is dominant and the image 
$pr(H)\subset N.$ Hence, 
 
$$\ov{\kappa}(H)\ge\ov{\kappa}(N)=2.$$
\end{proof}

\begin{Lemma}\label{elliptic} 
Let  $\al\ne\pm 2$ be a  complex number, and $n\ge 2,m\ge 2$ integers, $w(x,y)=[x^n,y^m] $ and $H:=H_{w,\al}.$  
Assume that $H$ is birational to the direct product $E\times\BP^1, $ where $E$ is an elliptic curve. 
 Let $pr:H\to E$ be  the induced  projection.  Then one of the following hold:
\begin{itemize}\item[a)]  $pr:H\to E$ is not surjective;
\item[b)]  the general fiber $F_e=pr^{-1}(e), e\in E$ is isomorphic to $\BP^1 $ with $k\ge 3$ punctures.
\end{itemize}\end{Lemma}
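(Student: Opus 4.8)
The plan is to deduce the dichotomy from the single fact that $\ov\kappa(H)=2$, together with the easy Iitaka inequality \eqnref{trace:101}. First I would note that $pr\colon H\to E$ may be treated as a morphism: $E$ is a smooth projective curve of genus $1$, a rational map to such a curve from a smooth surface has no points of indeterminacy (resolving one would produce a rational curve dominating $E$), and by \lemref{trace:singular} the singular locus of $H$ is finite, hence invariant and harmless. If $pr$ is not surjective, conclusion (a) holds and we are done, so assume $pr$ is surjective. Since the birational identification $H\dashrightarrow E\times\BP^1$ carries $pr$ to the first projection, the general fiber $F_e=pr^{-1}(e)$ is birational to $\BP^1$; being a closed curve in the affine variety $H$ it is a smooth affine rational curve, i.e. $F_e\cong\BP^1\setminus\{k\ \text{points}\}$ for some $k\ge 1$.

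The key input is that $\ov\kappa(H)=2$. When $|n|>2$ and $|m|>2$ this is exactly \propref{traceprop:kappa}(4). For the remaining cases $|n|=2$ or $|m|=2$ — precisely those not settled by \propref{traceprop:kappa} — I would establish it by exhibiting $H$ as a double cover of the plane. By \eqnref{traceeq:comword} and \propref{tracelem:ab} the defining equation of $H$ reads $(\phi_n(s)\phi_m(t))^2 J(s,t,u)=\al-2$; since $\al\ne 2$ forces $\phi_n(s)\phi_m(t)\ne 0$ on $H$, the substitution $V=\phi_n(s)\phi_m(t)\left(u-\tfrac{st}{2}\right)$ turns this into
\[
V^2=(\al-2)+\bigl(\phi_n(s)\phi_m(t)\bigr)^2\frac{(s^2-4)(t^2-4)}{4},
\]
presenting $H$ as an open piece of the double cover of $\BA^2_{s,t}$ branched along the curve $\ov B$ of bidegree $(2|n|,2|m|)$. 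Compactifying over $\BP^1\times\BP^1$ and computing the logarithmic canonical class $K+D+\tfrac12\ov B$, where $D$ is the boundary lying over $\{\phi_n(s)\phi_m(t)=0\}$ together with the fibres at infinity, one finds it to be the pullback of an ample divisor on $\BP^1\times\BP^1$ (both bidegrees strictly positive); hence $\ov\kappa(H)=2$.

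Granting $\ov\kappa(H)=2$, the conclusion is immediate: applying \eqnref{trace:101} to the surjective morphism $pr$ gives
\[
2=\ov\kappa(H)\le \ov\kappa(F_e)+\dim E=\ov\kappa(F_e)+1,
\]
so $\ov\kappa(F_e)\ge 1$. As $\ov\kappa(\BP^1\setminus\{k\ \text{points}\})$ equals $-\infty,\,0,\,1$ according to whether $k\le 1,\ k=2,$ or $k\ge 3$, we must have $k\ge 3$, which is conclusion (b).

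I expect the main obstacle to lie entirely in the edge cases $|n|=2$ or $|m|=2$ of the claim $\ov\kappa(H)=2$. There \propref{traceprop:kappa} yields only $\ov\kappa(H)\ge 0$, and this soft bound is genuinely insufficient: there exist affine surfaces birational to $E\times\BP^1$ that surject onto $E$ with general fibre $\BC^*$ (so $k=2$) and $\ov\kappa=1$, so the value $k=2$ cannot be excluded without the sharp computation $\ov\kappa(H)=2$. The delicate points in that computation are checking that the cover is unramified over the boundary lines $\{\phi_n(s)\phi_m(t)=0\}$, accounting for the finitely many singularities of the branch curve $\ov B$, and verifying the asserted positivity of $K+D+\tfrac12\ov B$.
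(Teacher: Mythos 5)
Your reduction is valid as far as it goes: granting $\ov{\kappa}(H)=2$, surjectivity of $pr$ together with easy addition gives $\ov{\kappa}(F_e)\ge 1$, hence $k\ge 3$, and the dichotomy follows. (A small caveat: \eqnref{trace:101} is stated in the paper for morphisms of affine varieties, whereas $E$ is projective; you need Iitaka's general form of easy addition, which does hold.) The genuine gap is the one you flag yourself: the equality $\ov{\kappa}(H)=2$ in the cases $|n|=2$ or $|m|=2$ is nowhere proved. \propref{traceprop:kappa} yields only $\ov{\kappa}(H)\ge 0$ there, your own analysis shows that nothing weaker than $\ov{\kappa}(H)=2$ can rule out $k=2$, and the double-cover computation you outline is precisely the hard part. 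It is not routine: the closure $\ov{B}$ of the branch curve meets the boundary divisor $D$ (the lines $\phi_n(s)=0$, $\phi_m(t)=0$, $s=\infty$, $t=\infty$) only at finitely many points, mostly at corners of $D$ and with multiplicity --- for instance each line $s=r$ with $\phi_n(r)=0$ meets $\ov{B}$ only at the corner $(r,\infty)$, with full multiplicity $2m$ --- so the pair is far from log smooth, and the naive count $K+D+\tfrac12\ov{B}=(2n-2,2m-2)$ being ample proves nothing until one passes to a log resolution and checks that the corrections supported over these corners do not destroy bigness. (Alternatively one could invoke Kawamata-type results on variation for the genus-one fibration $H\to\BC^*_s$ that appears when $|n|=2$, but that argument is likewise absent.) As written, the proposal is therefore incomplete at its crux.

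For comparison, the paper's proof bypasses Kodaira dimension entirely and works uniformly for $n,m\ge 2$. Since $\phi_n(s)\phi_m(t)$ is invertible on $H$, each of the functions $s,t$ omits at least two values of $\BP^1$ on $H$. If the general fiber $F_e$ had at most two punctures, then $F_e\cong\BC$ or $\BC^*$, and on such a curve a regular function omitting two values is either constant or, after subtracting an omitted finite value, a unit whose zero and pole are exactly the two punctures. In the first case $s$ itself is constant on fibers and descends to a nonconstant $h\in\BC(E)$; in the second, a suitable monomial combination of the two units $(s-c)$, $(t-c')$ is constant along fibers and descends to a nonconstant $h\in\BC(E)$ (nonconstant because $H$ dominates $\BA^2_{s,t}$). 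Either way, the fibers of $pr$ over the points of $E$ where $h$ attains a pole, zero, or omitted value cannot meet $H$, so $pr$ is not surjective and a) holds. This elementary argument about units on the fibers is what replaces the sharp computation $\ov{\kappa}(H)=2$ that your approach requires.
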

\begin{proof}   Assume that b) does not hold. It means that $F_e$ has at most two punctures.  Since functions $s$   and $t$ omit at least two values on $H,$ then either, say,  $s \mid_{F_e}$ is constant, or it has pole and zero precisely at punctures of $F_e,$ (in particular, there are precisely two punctures).

If $s$ is constant along $F_e$ for a general $e$ then  there is a rational function  $h\in\BC(E)$ such that $s=pr^*(h).$ 

If both $s$ and $t$ are not constant along $F_e$ then they both  have poles and zeros at punctures of $F_e$, thus there is a rational function
$h\in\BC(E)$ and integers $k,l$ such that $s^kt^l=h(e).$ Since the map $H\to\BC_{s,t}$ is dominant, $h$ cannot be constant.

 In both cases  the fibers over poles and zeros of $h$ do not intersect $H$, thus  $pr(H)\ne E. $
\end{proof}

\section{Conic bundle, representation of the solution set $S_{w,g}$} \label{conic}

In this section we briefly describe the construction 
 introduced in \cite{BKS}.  For readers convenience we provide brief but self-contained and explicit exposition.
   The construction  contains three  elements. \begin{enumerate}
 \item A  $\BP^1-$ bundle $X$  which is a conic bundle
over $V=\BP^3\setminus M$ (see {\bf Notation 2}). It does not depend on a word and is intrinsic part of $\SL(2)^2$ structure.
\item For a given word $w$ and a given element $g\in \SL(2)$  with $\tr(g)=\al\ne \pm 2$ the restriction $X_\al:=X_{w,\al}$ of the conic bundle $X$ 
onto the trace hypersurface  $H_{w,\al}.$ 
\item Embedding of $S_{w,g}$ into  $X_{w,\al}$  as an open dense subset, provided that $w\in[\CF_2,\CF_2]$ and  $H_{w,\al}$ is irreducible. 
\end{enumerate}

  {\bf Notation  and  assumptions 3}
\begin{enumerate}
\item $B\subset  G$- the subgroup of all lower triangular matrices.
\item $B-$ action  means action by conjugation  with elements of $B$ on $\SL(2,\BC).$
\item $T(s,t,u) =\mathrm{Tr}^{-1}(s,t,u)\cap  T  $ for a subset $T\subset G\times G$ and a point $(s,t,u)\in\BA^3.$

\item For $t\in\BC$ we set 
\begin{equation} \label{yt}
y_t=\begin{bmatrix} t & 1 \\ -1 & 0 \end{bmatrix},
\end{equation}

\item $Z_G(y)$ stands for the centralizer of $y\in G.$  If $y=y_t$ then  $Z_G(y)$ 
consists of all  matrices  \begin{equation}\label{eq:centr}
C(t,\gamma,\delta):=\begin{bmatrix}\frac{\gamma+t\delta}{2} & \delta\\-\delta&\frac{\gamma-t\delta}{2}\end{bmatrix}.
 \end{equation}
where $\ga,\delta\in\BC$ are subject to the condition  (reflecting $\det (C(t,\gamma,\delta))=1$)
\begin{equation}\label{centrcondition}
\gamma^2-(t^2-4)\delta^2=4.\end{equation}
It follows that $Z_G(y_t)\cap B=\pm Id.$ 
 Moreover, if $AC(t,\gamma,\delta)=C(t,\gamma,\delta)A$ for 
 $A\in G,$   then $A=C(t,\gamma',\delta')$ for some $\gamma',\delta'.$  This may be checked by direct computation.
\end{enumerate}

 \subsection{Construction of conic bundle $X$ }\label{wX}
 
{ \ } 

 {\bf  Step 1 .}   First consider set $Y\subset G\times G$  defined by  \begin{equation}\label{Y}
Y=\{(x,y_t)\in (G\times G)\cap \tilde V\}\subset G\times G.\end{equation}

    If the pair $(x,y_t)\in Y(s,t,u),$ then  
\begin{equation} \label{eq:xabP}
x:= x(a,b,s,t,u)=\begin{bmatrix}a & b \\ u+b-at & s-a\end{bmatrix}.
\end{equation}
Requiring that its determinant equals 1 means that $$\det x(a,b,s,t,u)=a(s-a)-b(u+b-at)=1,$$
 which, for a  fixed $(s,t,u)\in\BA^3$,  defines a conic  $R(s,t,u)$   in $\BA^2_{a,b},$ namely
\begin{equation} \label{eq:R}
Y(s,t,u)\cong R(s,t,u):=\{(a,b)\in\BA^2_{a,b}  \  | \ \ a(s-a)-b( u+b-at)=1\}.\end{equation}
 Thus there is an isomorphism $Y\to Y_X$ onto an affine variety 
 \begin{equation} \label{eq:yx}
 Y_X=\{ (s,t,u)\in V, a(s-a)-b( u+b-at)=1\}\subset\BA^5_{s,t,u,a,b}\end{equation}

{ \ }

 {\bf Step 2.} Set $X$  is 
the closure of $Y_X$ in $V\times \BP^2_{\tilde a:\tilde b:\tilde c},$  where  $a=\frac{\tilde a}{\tilde c},$  \ $b=\frac{\tilde b}{\tilde c.}$
 Namely  \begin{equation} \label{eq:x}
X=\{
-(\tilde a^2+\tilde b^2 +\tilde c^2) + t\tilde a\tilde b + s\tilde a\tilde c -  u\tilde b\tilde c = 0, (s,t,u)\in V\}\subset
 V\times \BP^2_{\tilde a:\tilde b:\tilde c}.\end{equation}
We denote by $p_{XV}$ the projection $(v,(\tilde a:\tilde b:\tilde c))\mapsto v$ of $X$ onto $V.$

 Let us mention the following properties of the construction. If $v=(s,t,u)\in V$ then \begin{itemize} 
 \item \begin{equation}\label{eq:tR}  p_{XV}^{-1}(v)=\ov   
 R(s,t,u) :=\{
-(\tilde a^2+\tilde b^2 +\tilde c^2) + t\tilde a\tilde b + s\tilde a\tilde c -  u\tilde b\tilde c = 0\}.\end{equation}
is a smooth rational curve;
 \item  $\ov   
 R(s,t,u) \setminus R(s,t,u)$ consists of   two points, if $t^2\ne 4,$ and a single point , if $t^2=4.$ 
\end{itemize}

 It follows that $X$ is a $\BP^1$ bundle over $V.$  

{ \ }

{\bf Step 3.}  We define a map $\pi:G^2\to X$ in the following way:

Let $(x,y)\in G^2, \Tr(x,y)=(s,t,u)\in V,$

\begin{equation} \label{xy} x=\begin{bmatrix} a_x& b_x \\ c_x & s-a_x \end{bmatrix},
y=\begin{bmatrix}a_y& b_y \\ c_y & t-a_y \end{bmatrix},
\end{equation}

 Then \begin{equation} \label{xy2}\pi(x,y)=(\Tr(x,y), (a_xb_y+ b_x(t-a_y):b_x:b_y)\in X\subset V\times\BP^2_{\tilde a:\tilde b:\tilde c}.\end{equation}

 The map $\pi $ is everywhere defined  in $\tilde V=\Tr^{-1}(V)$  because if $b_y=0,$ then $t-a_y\ne 0$ (since $\det(y)=1$) and $b_x\ne 0$ since $\Tr(x,y)\not\in M$ and $x,y$   cannot be  simultaneously conjugated to a pair of lower triangular matrices.  The following interpretation  was suggested by A. Skorobogatov.

\begin{Proposition}\label{quotient} $X$ is the  quotient of $G\times G$ with respect to  $B-$action. the action  by conjugation with the elements of the subgroup $B.$
\end{Proposition}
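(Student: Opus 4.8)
The plan is to show that the morphism $\pi$ exhibits $X$ as the geometric quotient (orbit space) of the $B$-invariant open set $\tilde V=\Tr^{-1}(V)\subseteq G\times G$ under the conjugation action of $B$. Three things must be checked: that $\pi$ is constant on $B$-orbits, that it is surjective with each fibre equal to a single $B$-orbit, and that this identification is strong enough (descent of invariant functions / local triviality) to give a genuine quotient and not merely a bijection on orbit sets. A dimension count frames the argument: for $(x,y)\in\tilde V$ the matrices $x,y$ have no common eigenvector, so by \eqnref{eq:centr} the common centralizer is $Z_G(x)\cap Z_G(y)=\{\pm Id\}$ and the $B$-orbit is $2$-dimensional; since $\dim\tilde V=6$ and $\dim X=4$ (as $X\to V$ is a $\BP^1$-bundle over the threefold $V$), orbits and fibres of $\pi$ have matching dimension, which is what one needs.

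First I would verify $B$-invariance by direct computation. Writing $b=\begin{bmatrix}\lambda&0\\ c&\lambda^{-1}\end{bmatrix}\in B$, conjugation $z\mapsto bzb^{-1}$ sends the upper-right entry $b_z$ to $\lambda^{2}b_z$ and the upper-left entry $a_z$ to $a_z-\lambda c\,b_z$. Applying this to both $x$ and $y$ in \eqnref{xy} shows that the ratio $b_x:b_y$ is unchanged and that $\tilde a=a_xb_y+b_x(t-a_y)$ is multiplied by $\lambda^{2}$; hence the projective point in \eqnref{xy2} is fixed, and since $\Tr$ is conjugation-invariant, $\pi(bxb^{-1},byb^{-1})=\pi(x,y)$.

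Next I would analyse the fibres over a fixed $(s,t,u)\in V$. On the locus $\{b_y\neq 0\}$ the matrix $y$ has trace $t$ and nonzero upper-right entry, so it is $B$-conjugate to the normal form $y_t$ of \eqnref{yt}; because $Z_G(y_t)\cap B=\{\pm Id\}$, the conjugating element is unique up to sign and the orbit meets $Y$ in exactly one point. Restricted to $Y$, formula \eqnref{xy2} becomes $\pi(x(a,b,s,t,u),y_t)=((s,t,u),(a:b:1))$, which is precisely the isomorphism $Y\xrightarrow{\sim}Y_X$ onto the affine chart $\{\tilde c\neq 0\}$ of \eqnref{eq:yx}. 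Thus $Y$ is a geometric slice: the map $(B/\{\pm Id\})\times Y\to\{b_y\neq 0\}$, $(b,(x,y_t))\mapsto(bxb^{-1},by_tb^{-1})$, is a bijection between $6$-dimensional varieties, and I would promote it to an isomorphism, giving $\{b_y\neq 0\}/B\cong Y_X$ and showing $\pi$ separates these orbits. The complementary locus $\{b_y=0\}\cap\tilde V$ (where $y$ is lower triangular, whence $b_x\neq 0$) maps into $\{\tilde c=0\}$; there $y$ has $e_2$ as eigenvector, the eigenvalue at $e_2$ being the $B$-invariant $a_y$, and the one or two admissible values of $a_y$ (the roots of $a_y(t-a_y)=1$) correspond bijectively to the points of $\ov R(s,t,u)\setminus R(s,t,u)$; normalising $x$ rather than $y$ with the remaining $B$-freedom handles these orbits symmetrically.

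The main obstacle is the boundary locus $\{\tilde c=0\}$ together with the upgrade from ``fibres are orbits'' to ``$X$ is the quotient.'' For the latter I would use that $X$ is smooth (a $\BP^1$-bundle over the smooth $V$) and that the slice isomorphism above makes $\pi$ a locally trivial $(B/\{\pm Id\})$-bundle over $Y_X$, so that $\CO_X=(\pi_*\CO_{\tilde V})^{B}$ there; gluing this chart with the symmetric chart covering $\{\tilde c=0\}$ yields that $\pi$ is a geometric quotient on all of $\tilde V$. The delicate points are verifying that the candidate slice maps are genuine isomorphisms rather than mere bijections (via smoothness and injectivity, or \'etaleness) and that the two charts agree over their overlap; everything else reduces to bookkeeping with the explicit formulas \eqnref{eq:xabP}--\eqnref{xy2}.
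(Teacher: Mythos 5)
Your proposal is correct, and its core coincides with the paper's own proof: both rest on the $B$-invariance of $\pi$, on the slice obtained by conjugating $y$ to the normal form $y_t$ when $b_y\neq 0$ (with uniqueness of the conjugating element up to sign coming from $Z_G(y_t)\cap B=\{\pm Id\}$), and on identifying the orbits over $\{b_y=0\}$ with the one or two eigenvalue choices, i.e.\ with the points of $\ov R(s,t,u)\setminus R(s,t,u)$. The differences are in scope and in the treatment of the boundary. The paper reads ``quotient'' set-theoretically: it declares at the outset that it suffices to exhibit a one-to-one correspondence between $B$-orbits and points of $X$, and it handles the locus $b_y=0$ by directly enumerating the orbits (Cases 2 and 3: $y$ diagonal with eigenvalue $\lambda$ a root of $\lambda^2-t\lambda+1=0$, one orbit per root, mapping to $(1:\lambda:0)$). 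You instead cover that locus by a second, symmetric slice — normalizing $x$ on $\{b_x\neq 0\}$, which indeed works since $b_x$ and $b_y$ cannot both vanish on $\tilde V$, so the two charts $\{\tilde b\neq 0\}$ and $\{\tilde c\neq 0\}$ cover $X$ — and you then upgrade the orbit-point bijection to a genuine geometric quotient (local triviality over the slices, descent of invariant functions, bijective morphisms between smooth varieties being isomorphisms in characteristic zero). That upgrade is a real strengthening which the paper does not attempt, and it is what the word ``quotient'' should mean if taken scheme-theoretically; the price is the extra gluing bookkeeping you flag, while the paper's version is more explicit about what the boundary orbits actually are.
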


\begin{proof}   It is sufficient to prove that $\pi$ provides one-to-one correspondence between conjugate classes and points of $X.$ 

{\bf Case 1.}   Let $v=(s,t,u)\in V,\ \Tr (x,y)=v$  and $b_y\ne 0$   (in notation of  \eqnref{xy}) .
Then   after conjugation $(x,y)$  with 
\begin{equation}\label{z}   z(y)=\sqrt{b_y}^{-1}\begin{bmatrix}1&  0 \\ a_y-t&b_y \end{bmatrix}\in B,\end{equation}
 one gets the pair $(x(a,b,s,t,u),y_t),$ where 
\begin{equation}\label{ab}a=a_x+b_x(t-a_y)/b_y, b=b_x/b_y. \end{equation} 
   Thus 
    $\pi(x,y)=(\Tr(x,y), (a_xb_y+b_x(t-a_y):b_x:b_y):=\bf{x}\in X $ 
  and any pair with $b_y\ne 0$ and fixed $a$ and $b$  (see \eqnref{ab})   belongs to the $B-$action orbit of   
 $(x(a,b,s,t,u),y_t).$   
  Thus $\pi^{-1}({\bf x})$    is contained in the orbit.   
   
   On the other hand  for any lower triangular matrix 
  $$z=\begin{bmatrix}\lambda&  0 \\  r&\frac{1}{\lambda}\end{bmatrix},$$  
  define $x=z^{-1}x(a,b,s,t,u)z, \ y=z^{-1}y_tz.$
   Then $(x(a,b,s,t,u),y_t) $  and $\pi(x,y)=  {\bf x}.$
     It follows that   $\pi^{-1}({\bf x})$ contains  the   $B-action$  orbit  of $(x(a,b,s,t,u),y_t).$

Note that  $y$  with $b_y=0$ is not conjugate to $y_t$ by elements of $B.$ 

{\bf Case 2.}   $t^2\ne 4 ,  b_y=0.$   
Take roots of equation $t^2-t+1=0:$
$$\lambda_1=\frac{1}{\lambda_2}, \ \lambda_1+\lambda_2=t.$$
 For both $\lambda_i, i=1,2$ there exist precisely one  orbit of pairs   $(x,y)$ with $\Tr(x,y)=(s,t,u) $  that are $B-$conjugate to the pair
 
\begin{equation} \label{xy1} x=\begin{bmatrix} a& 1 \\ c & s-a \end{bmatrix},
y=\begin{bmatrix}\lambda& 0 \\ 0 & t-\lambda\end{bmatrix}\
\end{equation},

where $\lambda=\lambda_1$ or $\lambda=\lambda_2,$ and $$a=\frac{u\lambda-s}{\lambda^2-1}, \ c=a(s-a)-1, \ \pi(x,y)=(s,t,u)(1:\lambda:0)).$$

{\bf Case 3.}   $t^2= 4,  b_y=0.$  There is one orbit ($\lambda_1=\lambda_2$)  for $(s,2,s)$ and one for $(s,-2,s)$  that are mapped to points 
$(s,2,s)(1:1:0)\in X,$ and $(s,-2,s)(1:-1:0)\in X,$ respectively.
\end{proof}

\begin{Lemma}\label{absirr}
 Let $H\subset V$ be an irreducible hypersurface,  let $X_H=p_{XV}^{-1}(H).$ Then  $X_H$ is irreducible.
 \end{Lemma}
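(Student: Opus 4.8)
The plan is to exploit the explicit description of $X$ as a conic bundle over $V$ and to reduce the irreducibility of $X_H$ to the standard principle that a proper morphism with irreducible equidimensional fibres over an irreducible base has irreducible total space. Concretely, restricting the equation \eqnref{eq:x} over $H$ exhibits $X_H$ as the zero locus in $H\times\BP^2_{\tilde a:\tilde b:\tilde c}$ of the single equation
$$\Phi:=-(\tilde a^2+\tilde b^2+\tilde c^2)+t\tilde a\tilde b+s\tilde a\tilde c-u\tilde b\tilde c,$$
and the projection $q:=p_{XV}|_{X_H}\colon X_H\to H$ is proper (as $\BP^2$ is proper) and surjective, with every fibre equal to the conic $\ov R(s,t,u)$ of \eqnref{eq:tR}. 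Since $H\subseteq V$, each such fibre is a \emph{smooth} conic in $\BP^2$, hence an irreducible curve isomorphic to $\BP^1$.

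First I would establish that $X_H$ has pure dimension $3$. The ambient $H\times\BP^2$ is irreducible of dimension $4$ because $H$ is an irreducible surface, and $\Phi$ vanishes identically on no fibre $\{h\}\times\BP^2$ (over a point of $V$ it cuts out a proper smooth conic). Thus $X_H$ is a nonempty proper closed subset defined by one equation, and Krull's principal ideal theorem forces every irreducible component of $X_H$ to have codimension exactly $1$, i.e.\ dimension $3$.

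Next I would show that every component dominates $H$ and then that there is only one of them. If a component $Z$ failed to dominate $H$, then $\dim\ov{q(Z)}\le 1$; since the fibres of $q|_Z$ lie inside fibres of $q$ and hence have dimension at most $1$, the theorem on the dimension of fibres would give $\dim Z\le\dim\ov{q(Z)}+1\le 2$, contradicting $\dim Z=3$. So fix any component $Z$: as $q|_Z\colon Z\to H$ is dominant with $\dim Z=\dim H+1$, its generic fibre has dimension $1$. But this generic fibre is $q^{-1}(h)\cap Z$, a $1$-dimensional closed subset of the irreducible curve $q^{-1}(h)$, and must therefore equal $q^{-1}(h)$. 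Hence there is a dense open $H^{\circ}\subseteq H$ with $q^{-1}(h)\subseteq Z$ for all $h\in H^{\circ}$, that is $q^{-1}(H^{\circ})\subseteq Z$.

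Finally I would conclude. Because each component of $X_H$ dominates $H$, the open set $q^{-1}(H^{\circ})$ meets every component and is consequently dense in $X_H$; being contained in the single closed set $Z$, it gives $X_H=\ov{q^{-1}(H^{\circ})}\subseteq Z$, so $X_H=Z$ is irreducible. The only point requiring genuine care is the passage from ``the generic fibre of $q|_Z$ is a full fibre of $q$'' to ``$X_H$ is irreducible'': this is precisely where both the pure $3$-dimensionality (ruling out a hidden low-dimensional stray component) and the irreducibility of each individual fibre are used. I note that if one knew the conic bundle to be Zariski-locally trivial over $H$, the conclusion would follow faster, by covering $H$ with trivializing opens $V_i$ (which pairwise meet since $H$ is irreducible) and observing that $\bigcup_i V_i\times\BP^1$ is then irreducible; the dimension-theoretic argument above has the advantage of not assuming local triviality.
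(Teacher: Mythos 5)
Your proof is correct and takes essentially the same route as the paper's: both show every irreducible component of $X_H$ is $3$-dimensional and dominates $H$, use the irreducibility and closedness of the conic fibres to conclude that each component contains the full preimage $q^{-1}(U)$ of a dense open $U\subseteq H$, and deduce that two distinct components would have to coincide. The only cosmetic difference is that you obtain the dimension bound via Krull's principal ideal theorem for the hypersurface $\{\Phi=0\}$ in $H\times\BP^2$, whereas the paper cites the same fact "by construction" for the preimage of the hypersurface $H$ under the $\BP^1$-bundle $X\to V$.
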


\begin{proof}  The restriction of $p_{XV}$ onto $X_H$  we denote by $p_H.$
 By construction, every irreducible component  $A$ of $X_H$ has dimension at least three. Since $H$ has dimension 2, and every fiber is $\BP^1,$
 we have $\dim A=3$ and it is mapped dominantly into $H.$  Hence $p_{H}(A)$  contains an open subset $U$ of $H.$ Moreover, every fiber $F=p_H^{-1}(u), 
  u\in U$ is irreducible  and  closed in $X_H.$  Hence $F\cap  A$ is one-dimension     closed  subset of $F,$ i.e., $F\subset  A.$  It follows that $A$ contains an open subset $p_H^{-1}(U).$
  
  It follows that  if 
  there were two components, they would intersect by an open subset  of $X_H,$ hence, coincide. \end{proof}  
  
  \subsection {Properties
 of $S_{w,g}.$}\label{subsec:Swg}

Let now $w\in[\CF_2,  \CF_2],$ be a word,  $P:=P_w$ its trace polynomial, $g\in G,$ $\al=\tr(g)\ne \pm 2$ 
and $$S_{w,g}=\{(x,y)\in G^2  \ | \ w(x,y)=g\}.$$

 Thanks to \propref{tracelem:ab}
 the hypersurface $$H:=H_{P,\al}=\{P(s,t,u)=\al\}\subset\BA^3$$ does not meet the surface $M=\{J=0\},$
thus $H_{P,\al}\subset V.$    The set $S_{w,g}$ is the subset of 
$ \mathrm{Tr}^{-1}(H_{P,\al}).$ 
We consider the  restriction $X_{P,\al}:= X_H=p_{XV}^{-1}(H_{P,\al})$  of the conic bundle $ p_{XV}: X\to V$ onto 
$H_{P,\al}$.   Namely, 
$$
X_{P,\al}=\{(s,t,u)(a:b:c)\in V\times\BP^2_{a:b:c} \ | \  P_w(s,t,u)=\al,$$ $$ -(\tilde a^2+\tilde b^2 +\tilde c^2) +
 t\tilde a\tilde b + s\tilde a\tilde c -  u\tilde b\tilde c = 0.\}$$

 We have a commutative diagram.

\begin{Diagram}\label{4d}
 \begin{alignat}{10}
     & G^2   &\supset\quad & S_{w,g}&     \notag\\
     &\downarrow  {\pi}   &      {}      & \downarrow  {\pi}  &  \notag \\
     & X           & \supset \quad&X_{P,\al}& \notag \\
   &\downarrow  {p_{XV}}   &      {}      & \downarrow  {p_{XV}}  &  \notag \\
  & V          & \supset \quad &H_{P,\al}& \notag 
\end{alignat}
\end{Diagram}

 The following    facts are  proven in  \cite[Theorem  3.4]{BKS}.

\begin{Proposition}\label{prop:main}(\cite[Theorem3.4]{BKS}) Assume that $w(x,y)\in[\CF_2,\CF_2] ,\tr(g)=\al\ne\pm2$ and $ H_{P,\al}$ is 
irreducible.  Then\begin{itemize}\item[(a)] $ X_{P,\al} $  is irreducible;
\item[(b)]
 $\mathrm{Tr}(S_{w,g})= H_{P,\al};$
 \item[(c)] The restriction  of $\pi$
on 
$S_{w,g}$ is injective dominant birational  morphism  of $S_{w,g}$   into $ X_{P,\al} ;$
\item[(d)] The following diagram commutes
\begin{equation}\label{itemd}  
\begin{CD}
 S_{w,g}@>{ \pi}>>  X_{P,\al}  \\
@V \Tr VV @Vp_{XV} VV \\
H_{P,\al} @>=>>H_{P,\al}
\end{CD}. 
\end{equation}

\item[(e)]  For a point $v=(s,t,u)\in H_{P,\al}$ the fiber  $\pi^{-1}(v)=S_{w,g}(s,u,t)$ is an orbit of the centralizer $Z_G(g),$ hence, is isomorphic to a rational curve with two distinct punctures.

\item[(f)]  $S:=\pi(S_{w,g})$ is an open dense subset of $ X_{P,\al} .$
 \item [(g]  $S_{w,g}$ is  irreducible; \end{itemize}.\end{Proposition}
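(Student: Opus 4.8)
The plan is to derive all seven items from two structural inputs that are already available: that $X$ is the quotient of $G^2$ by $B$-conjugation (\propref{quotient}), and that over a point $v\in V$ the trace fibre $\Tr^{-1}(v)$ is a single $G$-conjugacy orbit of irreducible pairs, with stabiliser $\{\pm Id\}$ (this is the rigidity of irreducible $\SL(2)$-representations, the pairs being irreducible exactly because $v\notin M$, see \textbf{Property} (1)). I would prove the items in logical rather than alphabetical order. Item (a) is immediate: since $w\in[\CF_2,\CF_2]$ and $\al\ne 2$, \eqnref{traceeq:comword} gives $\al-2=Q_w J$ along $H_{P,\al}$, so $J$ never vanishes there, $H_{P,\al}\cap M=\emptyset$, and $H_{P,\al}$ is an irreducible hypersurface lying in $V$; \lemref{absirr} then yields irreducibility of $X_{P,\al}=p_{XV}^{-1}(H_{P,\al})$.

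For (b) and (d), the inclusion $\Tr(S_{w,g})\subseteq H_{P,\al}$ is the commuting square \eqnref{traceeq:cd}. Conversely, given $v\in H_{P,\al}$, surjectivity of $\Tr$ produces $(x_0,y_0)$ with $\Tr(x_0,y_0)=v$, whence $\tr w(x_0,y_0)=P_w(v)=\al$; as $\al\ne\pm 2$ every matrix of trace $\al$ is regular semisimple and conjugate to $g$, so conjugating $(x_0,y_0)$ by a suitable $h$ moves $w(x_0,y_0)$ to $g$ and places a pair over $v$ in $S_{w,g}$. This gives (b), and (d) commutes because the $V$-component of $\pi$ is by construction $\Tr$ (see \eqnref{xy2}). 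Next I would establish (e), which is the crux. The torus $Z_G(g)$ acts on $S_{w,g}$ by simultaneous conjugation, preserving both the equation $w=g$ and traces, hence acts on each $S_{w,g}(v)$; fixing a base point and invoking the rigidity above, $S_{w,g}(v)$ is exactly one $Z_G(g)$-orbit. Since $g$ is regular semisimple, $Z_G(g)\cong\BC^*$ and the orbit is $Z_G(g)/\{\pm Id\}\cong\BC^*$, a rational curve with two punctures.

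To reach the embedding statements (c), (f), (g) I first record the harmless normalisation $Z_G(g)\cap B=\{\pm Id\}$: replacing $g$ by a conjugate changes $S_{w,g}$ only by an isomorphism, and for a generic representative $e_2$ is not an eigenvector of $g$, equivalently the maximal torus $Z_G(g)$ is not contained in the Borel $B$. Under this assumption, injectivity of $\pi|_{S_{w,g}}$ follows from \propref{quotient}: if $\pi(x_1,y_1)=\pi(x_2,y_2)$, the pairs are $B$-conjugate by some $z$, and $w=g$ forces $z\in Z_G(g)\cap B=\{\pm Id\}$, so the pairs coincide; the image lands in $X_{P,\al}$ by (b) and (d). I would then finish with dimension bookkeeping. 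Each fibre $S_{w,g}(v)\cong\BC^*$ injects into the fibre $\ov R(v)\cong\BP^1$ of $p_{XV}$, so $\pi(S_{w,g})$ meets a general fibre in a dense subcurve; as $S_{w,g}$ is the fibre of the word map $G^2\to G$ over $g$, every component has dimension $\ge 6-3=3=\dim X_{P,\al}$, and injectivity forces every component to dominate the irreducible $X_{P,\al}$. Two distinct components would then carry dense images sharing a common open set, impossible under injectivity; hence $S_{w,g}$ is irreducible (item (g)), and $\pi|_{S_{w,g}}$ is a dominant injective morphism of irreducible varieties, so birational in characteristic $0$ (item (c)).

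For (f) I would invoke Zariski's Main Theorem. The surface $H_{P,\al}$ is a hypersurface in $\BA^3$, hence Cohen--Macaulay (so $S_2$), with only finitely many singular points by \lemref{trace:singular} (so $R_1$); by Serre's criterion it is normal, and since $X_{P,\al}\to H_{P,\al}$ is a $\BP^1$-bundle the total space $X_{P,\al}$ is normal. A quasi-finite birational morphism to a normal variety is an open immersion, so $S=\pi(S_{w,g})$ is open and dense in $X_{P,\al}$. The main obstacle is precisely this injectivity/openness package in (c) and (f): it is here that the position of $g$ relative to $B$ matters, and one must both isolate the condition $Z_G(g)\cap B=\{\pm Id\}$ (without which $\pi$ collapses the fibres $S_{w,g}(v)$, as happens for diagonal $g$) and verify normality of $X_{P,\al}$ before ZMT can be applied.
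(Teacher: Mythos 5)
Your proposal is essentially correct and, on items (a)--(d) and (g), follows the same route as the paper's own (admittedly sketchy) proof: (a) via \lemref{absirr} after noting $H_{P,\al}\subset V$; (b) via surjectivity of $\Tr$ plus conjugacy of matrices of equal trace $\ne\pm 2$; injectivity in (c) via \propref{quotient} combined with $Z_G(g)\cap B=\{\pm Id\}$ after normalizing $g$ (the paper simply takes $g=y_\al$, which is the same move as your ``generic representative''); then the dimension count $\dim S_{w,g}\ge 3$ forcing dominance and birationality. You actually spell out the argument for (g) (two three-dimensional components would have dense images in the irreducible $X_{P,\al}$, contradicting injectivity) more carefully than the paper, which only says that $S_{w,g}$ is birational to an irreducible variety.

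You diverge on (e). The paper does not invoke rigidity of irreducible pairs; it argues internally: the fiber $S_{w,g}(v)$ injects into $p_{XV}^{-1}(v)\cong\BP^1$, contains the $Z_G(g)$-orbit $O$ of any of its points (a rational curve with two punctures, by the explicit description of $Z_G(y_t)$ in Notation 3(5)), and any point of $S_{w,g}(v)\setminus O$ --- of which there are at most two --- would be fixed by the connected group $Z_G(g)$, forcing $x$ and $y$ to commute with $g$, hence $\Tr(x,y)\in M$, contradicting $v\in V$. Your shortcut via the classical fact that $(\tr x,\tr y,\tr xy)$ determines an irreducible pair up to conjugation, with stabilizer $\{\pm Id\}$, is valid and cleaner (it also reproves (b) en route), but that fact is not stated anywhere in the paper --- only the common-eigenvector criterion of \cite{Fri83} is quoted --- so you must cite it explicitly rather than treat it as an available ``structural input.''

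On (f) there is a genuine, though localized, gap. Your Zariski Main Theorem argument needs $X_{P,\al}$ to be normal, which you deduce from finiteness of the singular locus of $H_{P,\al}$ via \lemref{trace:singular}. That lemma is proved only for $w=[x^n,y^m]$, whereas the proposition is stated for every $w\in[\CF_2,\CF_2]$ with $H_{P,\al}$ irreducible; for such general $w$ nothing in the paper controls $\mathrm{Sing}(H_{P,\al})$, and normality is not a formality here: an injective birational morphism to a non-normal variety can fail to have open image (remove one preimage of the node from the normalization of a nodal curve). So your proof of (f) covers the words used in the rest of the paper but not the stated generality. In fairness, the paper's own treatment of (f) is not a proof at all --- it asserts that (f) ``follows from (c)'' and defers to \cite[Theorem 3.4]{BKS} --- so your ZMT route is a real improvement where it applies; it just needs either the restriction to $w=[x^n,y^m]$ made explicit, or a normality (or openness) argument valid for all $w\in[\CF_2,\CF_2]$.
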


\begin{proof} For  the  sake of completeness we provide the idea of the proof. 
We may assume that $g=y_\al$ since the statements of the Proposition remain valid after conjugation.

 (a) follows from \lemref{absirr} 

Proof of (b).  Since the map $\Tr:G^2\to V$ is surjective, 
for every point $(s,t,u)\in H_{w,\al}$ there is a pair 
$(x,y)\in G^2$ such that $\Tr (x,y)=(s,t,u).$  
 Then  $\tr(w(x,y))=\al$, since  $(s,t,u)\in H_{w,\al}.$  Thus there is a matrix $C$ such that 
 $$Cw(x,y)C^{-1}=w(CxC^{-1},CyC^{-1})=g ,$$ $$  (CxC^{-1},CyC^{-1})\in S_{w,g}, \  \mathrm{Tr}(CxC^{-1},CyC^{-1})=(s,t,u).$$
 
Proof of (c).  It is sufficient to check that $\pi$ is injective on $S_{w,g}.$ 
Assume that there are two pairs $(x,y),(x_1,y_1)\in S_{w,g}$ such that $\pi(x_1,y_1)=\pi(x,y).$
It means   there exist a lower triangular matrix $z\in B$ such that $$zxz^{-1}=x_1  , zyz^{-1}=y_1 .$$
 Then $$y_\al=g=w(x,y)=w(x_1,y_1)=zw(x,y)z^{-1}=zy_\al z^{-1}.$$
 Hence, $z\in Z_G(y_\al)\cap B$ which implies that $z=\pm Id.$
Hence, $x=x_1, y=y_1.$ Thus $\pi$ is injective on $S_{w,g}.$

Since $\dim( X_{P,\al})=3,$ $  X_{P,\al} $ is irreducible and 
 $\dim( S_{w,g})\ge 3,$ it means that $\dim S_{w,g}= 3,$  and  $   \pi \mid_{S_{w,g}} $  is   dominant  and birational.

Item (d)   follows from \diagramref{4d} and the fact that $p_{XV}\circ\pi=\Tr.$ 

Proof of (e).  
 We have to show that $S_{w,g}(s,u,t)$ is a rational curve with two punctures.   Since the fiber  $S_{w,g}(s,u,t)$ is mapped injectively into 
$ p_{XV}^{-1}(s,t,u)\cong\BP^1, $ it is a rational curve. 
 $S_{w,g}(s,u,t)$ obviously contains an orbit  $O$  of the action by conjugation of the centralizer
$Z_G(g)$ which is isomorphic to a rational curve with two punctures (see {\bf Notation 3}(5) ).  Let us show that  $S_{w,g}(s,u,t)=O,$  i.e. these two punctures do not belong to $S_{w,g}(s,u,t).$
Assume that $(x,y)\in  S_{w,g}(s,u,t)\setminus O.$  Since there are at most two such points,   $(x,y)$ is invariant under the action of   $Z_G(g).$
But if $zxz^{-1}=x,zyz^{-1}=y$ for some $z\in  Z_G(g)$ then both commute with $g=y_\al$ (see {\bf Notation 3}(5) )  and $\Tr(x,y)\in M,$  which contradicts to the condition 
$v=(s,t,u)\in H_{w,\al}.$

(f)  follows from (c) since  it says that 
  $S=\pi(S_{w,g})$ is an image of an affine set under birational injective morphism into an irreducible variety of the same dimension.

(g)  follows from the fact that $(S_{w,g})$ is birational to an irreducible variety $X_{P,\al}.$

\end{proof}

\begin{Lemma}\label{coniclem:product} Assume that $n,m$ are non-zero integers, $\al\ne \pm 2,$ \ 
$w=[x^n,y^m]\in[\CF_2,  \CF_2],$ and  $P:=P_w$ its trace polynomial.  Then $X_{P,\al}$ is birational to $H_{P,\al}\times \BP^1.$\end{Lemma}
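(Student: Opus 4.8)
The plan is to reduce the statement to producing a single rational point on the generic fibre of the conic bundle $p_{XV}\colon X_{P,\al}\to H_{P,\al}$. By \propref{traceprop:irreducible} the surface $H:=H_{P,\al}$ is irreducible, so \propref{prop:main} applies; in particular $X_{P,\al}$ is irreducible and, since $H\subset V=\BA^3\setminus M$, every fibre $\ov R(s,t,u)$ (see \eqnref{eq:tR}) is a smooth conic, i.e.\ geometrically a $\BP^1$. A smooth plane conic over a field $K$ is isomorphic to $\BP^1_K$ as soon as it carries one $K$-rational point. Writing $K=\BC(H)$ for the function field, it therefore suffices to exhibit a $K$-point on the generic conic $\ov R$; such a point spreads out to a rational section of $p_{XV}$, which together with the conic structure identifies $X_{P,\al}$ birationally with $\BP^1_K$, that is, with $H\times\BP^1$.

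To find the point I would diagonalise the conic. Completing the square in $\tilde a$ in the equation of $\ov R$ from \eqnref{eq:tR} reduces it to the vanishing of the binary form $(1-\tfrac{t^2}{4})\tilde b^2+(u-\tfrac{st}{2})\tilde b\tilde c+(1-\tfrac{s^2}{4})\tilde c^2$, whose discriminant is exactly $J(s,t,u)=(u-\tfrac{st}{2})^2-\tfrac{(s^2-4)(t^2-4)}{4}$ (equivalently, the symmetric matrix of $\ov R$ has determinant $-J/4$). Now the hypothesis $w=[x^n,y^m]$ enters decisively: by \propref{tracelem:ab}(2) and \eqnref{traceeq:comword} one has $P_w-2=(\phi_n(s)\phi_m(t))^2\,J$, so on $H$, where $P_w=\al$, the function $J$ equals $\rho^2$ with $\rho=\be/(\phi_n(s)\phi_m(t))\in K$ and $\be^2=\al-2$. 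Thus the discriminant of the binary form is a square in $K$, so the form splits into two linear factors over $K$, yielding a rational isotropic direction $(\tilde b:\tilde c)$; setting $\tilde a=(t\tilde b+s\tilde c)/2$ then produces the desired $K$-point of $\ov R$. (Here $\phi_n(s)\phi_m(t)$ is nowhere zero on $H$ and $t^2\ne4$ at the generic point, so the construction is valid.)

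The main obstacle is exactly this discriminant identification: recognising that the intrinsic conic $\ov R$ carries the universal polynomial $J$ as its discriminant, and that the factorisation $P_w-2=(\phi_n(s)\phi_m(t))^2\,J$ forces $J$ to become a perfect square precisely on the trace hypersurface of a word of the form $[x^n,y^m]$. Everything else is formal. I note an alternative, more conceptual route that avoids the explicit computation: by \propref{prop:main} the morphism $\Tr\colon S_{w,g}\to H$ has every fibre a single free orbit of $Z_G(g)\cong\BC^*$ acting by conjugation (the stabiliser of a pair with no common eigenvector being $\pm Id$), so over the generic point of $H$ the variety $S_{w,g}$ is a $\BC^*$-torsor; Hilbert's Theorem 90 makes this torsor trivial, whence $S_{w,g}$, and with it $X_{P,\al}$, is birational to $H\times\BC^*$ and hence to $H\times\BP^1$.
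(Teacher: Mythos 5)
Your main argument is essentially the paper's own proof: the paper likewise completes the square (via the explicit coordinate change \eqnref{tau}), uses \propref{tracelem:ab}(2) to write $J=\gamma^2$ with $\gamma=\be/(\phi_n(s)\phi_m(t))\in\BC(H_{P,\al})$ on $H_{P,\al}$, and exhibits the resulting section $\{\tau=0,\ \varkappa=\gamma\rho\}$ of the diagonalized conic bundle, so your function-field phrasing (square discriminant gives a rational point, hence the generic conic is $\BP^1_K$) is the same computation in different language. Your alternative route---viewing the generic fibre of $S_{w,g}\to H_{P,\al}$ as a torsor under $Z_G(g)/\{\pm Id\}\cong \BC^*$ (note the orbit is free only after dividing out $\pm Id$) and applying Hilbert 90---is genuinely different from the paper and also sound; it is in fact more flexible, since it uses only $w\in[\CF_2,\CF_2]$ and irreducibility of the trace surface rather than the explicit factorization $P_w-2=(\phi_n(s)\phi_m(t))^2 J$.
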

\begin{proof} 
Take   $\be \  | \  \be^2=\al-2$ and define $\gamma\in\BC$ such that  $$\gamma=\frac{\be}{\phi_n(s)\phi_m(t)}.$$ Then $J(s,t,u)=\ga^2$   on $H_{P,\al}.$

 A  birational map  $X_{P,\al}\to \tilde X$ defined  by

 \begin{equation}\label{tau}\tau=\tilde a-\frac{\tilde bt}{2}-\tilde c\frac{s}{2}, \  \varkappa=\frac{\tilde c(st-2u)}{2}+\frac{(t^2-4)\tilde b}{2},\ \rho=\tilde c.\end{equation}

maps $X$ to \begin{equation}\label{tau1} \tilde X:=\{\varkappa^2-(t^2-4)\tau^2=\gamma^2\rho^2\}\subset V\times\BP^3_{\varkappa:\tau:\rho} 
\end{equation}
 and is an isomorphism outside  $\{t^2=4\}.$ The natural projection  of  $\tilde X$ onto $V$ has a section $\{\tau=0, \varkappa=\ga\rho\},$
 Thus $\tilde X$ and $X_{P,\al}$  are   birational to $H_{P,\al}\times \BP^1.$

\end{proof} 

\begin{Proposition}\label{coniclem:nonegative} Assume that $n,m$ are non-zero integers,   $g\in  G, \tr(g)=\al\ne  \pm 2,$ \ 
$w=[x^n,y^m]\in[\CF_2,  \CF_2],$ and  $P:=P_w$ its trace polynomial.  Then logarithmic Kodaira dimension  $\ov{\kappa}(S_{w,g})\ge 0.$ \end{Proposition}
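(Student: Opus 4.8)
The plan is to exhibit $S_{w,g}$ as the total space of a fibration over the trace surface $H:=H_{P,\al}$ whose general fiber has nonnegative logarithmic Kodaira dimension, and then to invoke subadditivity. First I would record that both spaces are irreducible, so that $\ov{\kappa}$ is a well-defined single number: $H$ is irreducible by \propref{traceprop:irreducible}, and $S_{w,g}$ is irreducible by \propref{prop:main}(g). Since $w=[x^n,y^m]\in[\CF_2,\CF_2]$ and $\al\ne\pm2$, \propref{prop:main} applies, and the trace map restricts to a dominant morphism $\Tr\colon S_{w,g}\to H$ (this is the lower commuting square of \diagramref{4d}, using $p_{XV}\circ\pi=\Tr$). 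By \propref{prop:main}(e), the fiber of this morphism over a point $v=(s,t,u)\in H$ is $S_{w,g}(s,t,u)$, a single orbit of the centralizer $Z_G(g)$, isomorphic to a rational curve with two punctures, i.e. to $\BC^*$.

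Next I would use the value $\ov{\kappa}(\BC^*)=0$. Because every general fiber of $\Tr\colon S_{w,g}\to H$ is isomorphic to $\BC^*$ (in particular the fibers are irreducible and share the same log-Kodaira dimension), they are ``sufficiently general'' in the sense spelled out in \remarkref{kodaira-inequalities}(1). Hence Fujino's subadditivity inequality \eqnref{intro:102} applies with $V=S_{w,g}$, $W=H$ and general fiber $F\cong\BC^*$, giving
$$\ov{\kappa}(S_{w,g})\ \ge\ \ov{\kappa}(\BC^*)+\ov{\kappa}(H)\ =\ \ov{\kappa}(H).$$

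Finally I would invoke \propref{traceprop:kappa}(2), which asserts $\ov{\kappa}(H)\ge 0$ for $w=[x^n,y^m]$ with $n,m$ nonzero and $\al\ne\pm2$. Combining the two displays yields $\ov{\kappa}(S_{w,g})\ge 0$, as required.

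I expect the only delicate point to be the justification that the fibers are sufficiently general so that the subadditivity inequality is applicable; this is exactly the content of the parenthetical remark in \remarkref{kodaira-inequalities}(1), together with the explicit and uniform identification of each general fiber as $\BC^*$ coming from \propref{prop:main}(e). One caution worth stressing in the write-up: the tempting birational model $H\times\BP^1$ furnished by \lemref{coniclem:product} should \emph{not} be used with its $\BP^1$-fibration, since $\BP^1$ has $\ov{\kappa}=-\infty$ and would give no lower bound. The essential gain comes from working with $S_{w,g}$ itself, where the two ``puncture sections'' of the conic bundle $X_{P,\al}$ have been removed, so that the fibers are $\BC^*$ rather than $\BP^1$.
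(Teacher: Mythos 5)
Your proposal is correct and follows essentially the same route as the paper: both arguments fiber the solution variety over the trace surface $H$ with fibers isomorphic to $\BC^*$ (via \propref{prop:main}(e)), apply Fujino's subadditivity inequality from \remarkref{kodaira-inequalities}, and conclude with $\ov{\kappa}(H)\ge 0$ from \propref{traceprop:kappa}. The only cosmetic difference is that the paper first replaces $S_{w,g}$ by its image $S=\pi(S_{w,g})\subset X_{P,\al}$ and uses the restriction of $p_{XV}$, whereas you work directly with $\Tr\colon S_{w,g}\to H$; your closing caution about not using the $H\times\BP^1$ model is apt but not needed.
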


\begin{proof} Let us denote $ S=\pi(S_{w,g})\subset X_{P,\al}.$  In view of \propref{prop:main}(c) it is sufficient to prove  that $\ov{\kappa}(S)\ge 0.$
We know  (thanks to \propref{prop:main}(e) ) that the  fibers of  restriction of $p_{XV}$ on $ S$ are all rational curves with two punctures and 
(thanks to \propref{traceprop:kappa}) that $\ov{\kappa}(H_{P,\al})\ge 0.$

 Now the proposition follows from the inequality  due to O. Fujino, see   \remarkref{kodaira-inequalities}.\end{proof}

\section{Makar-Limanov invariant.}\label{ML}
\

Let $A$ be a ring that is an algebra over $\BC.$ A locally nilpotent derivation $\partial: A\to A$ is a $\BC-$
linear map meeting the following conditions:\begin{itemize}\item $\partial(ab)=\partial(a)b+a\partial(b),$ where $a,b\in A.$
\item for any $a\in A$ there exist  $n\in\BN$ such that $\partial^n(a)=0.$  The kernel $A^{\pl}$ of $\pl$ is a subring of $A.$\end{itemize}
The set of all locally nilpotent derivations on the ring $A$ we denote by $\LND(A).$  The details on locally 
  nilpotent derivations may be found in \cite {Fr}.
 A locally nilpotent derivation $\pl\in\LND(A)$  defines a homomorphism  $\mathrm{exp}:A\to A[\lambda]$ defined by 
$$\mathrm{exp}(a)=1+\lambda\pl(a)+\lambda^2\frac{\pl^2}{2}(a)\dots$$


 Let $X$ be a smooth  affine variety over $\BC$ with the ring of regular functions $A:=\CO(X).$ 

If $\pl\in\LND(A),$   then   for every $\lambda\in\BC$ the operator $$ \mathrm{exp}(\lambda\pl):A\to A, \ f\mapsto
(1+\lambda\pl+\lambda^2\frac{\pl^2}{2}\dots)(f)$$
 is an automorphism of $\CO(X)$ inducing an automorphism of $X.$  Thus 
every $\pl\in\LND(A)$ gives rise to a $\BC^+$ action  $\varphi_{\pl}:\BC^+\times X \to X$ with the following properties
\begin{itemize}\item for every   $  \lambda\in\BC$ the map $\varphi_{\pl}(\lambda) :X\to X, \ (x)\to \varphi_{\pl}(\lambda,x)$ is an automorphism of $X;$
\item for every $x\in X$ the orbit $o(\varphi_{\pl},x)=\{\varphi_{\pl}(\lambda,x),  \lambda\in\BC\}$  is either  a point or is isomorphic to $\BC;$
\item $\varphi_{\pl}(\lambda+\mu)=\varphi_{\pl}(\lambda)\circ
\varphi_{\pl}(\mu);$\item  if $f\in  A$ then $f\in \ker(\pl)=A^{\pl}$ if and only if $f$ is constant on   the orbit  $o(x)$ for every $x\in X.$
\end{itemize}

\begin{Definition}   Let $X$ be an affine variety with the ring of regular functions $A:=\CO(X).$  The Makar-Limanov invariant $\ML(X)$ is defined as 

$$\ML(X)=\bigcap\limits_{\pl\in\LND(A)}A^{\pl}.$$
\end{Definition}

By construction, $\ML(X)$ is a subring of $\CO(X)$  consisting of all the functions on $X$ that are invariant under all $\BC^+-$ actions on $X. $ 
 If 
$ML(X)=\CO(X)$ (which means    $\LND(\CO(X))=\{0\}$) then $X$ admits no  $\BC^+-$ actions, if $ML(X)=\BC$  then the group generated by $\BC^+-$ actions has an open orbit.

\begin{Example}\label{mlinvariant}  Let a surface $S=C_1\times C_2$ be a product of two  smooth affine curves $C_1,C_2.$ Then 
\begin{itemize}\item $\ML(S)=\BC$ if both $C_1$ and $C_2$ are isomorphic to $\BC.$  Indeed , $S\cong\BC^2_{x,y}.$  One can define
$\pl_1=\frac{\partial }{\partial x}$ and  $\pl_2=\frac{\partial }{\partial y}$  and $A^{\pl_1}\cap A^{\pl_2}$ consists of constant functions.
\item $\ML(S)=\CO(C_1)$ if $C_2$ is isomorphic to $\BC_x$  and $C_1$ is not.  $\ML(S)=\CO(S)^{\pl},$ where $\pl=\frac{\partial }{\partial x}.$
The induced $\BC^+-$action  is  $\varphi_{\pl}(\lambda,(x,c))=(x+\lambda,c)$ for a point $(x,c)\in \BC_x\times C_2.$
 Since an orbit of a $\BC^+$-action is isomorphic to $\BC$ it cannot be mapped dominantly to $C_1.$ Thus 
 every other element of $\LND(S)$ induces a $\BC^+-$ action with the same general orbit.
\item $\ML(S)=\CO(S)$  if both $C_1$ and $C_2$ are  not  isomorphic to $\BC.$
In this case $\LND(\CO(S))=\{\pl_0\},$  where $\pl_0(f)=0$ for all $f.$  There are no  curves    isomorphic to $\BC$ in $S.$
\end{itemize}\end{Example}.

 Assume that $X$ is affine and $\ML(X)\ne\CO(X).$  
  Then    $\Aut(X) $ contains a subgroup $H=\{\varphi_{\pl}(\lambda), \lambda\in\BC\}$  isomorphic to 
  $\BC^+.$  For a general point $x$ the orbit $o(H,x) $ is  isomorphic  to $\BC$ and , hence $\ov{\kappa}(o(H,x))=-\infty.$ 
    According to  \eqnref{Iitaka1} (see  \remarkref{kodaira-inequalities})
$$\ov{\kappa}(X)\le\ov{\kappa}(o(H,x))+\dim X-\dim o(H,x)=-\infty,$$
thus \begin{equation}\label{ml:infty}\ov{\kappa}(X)=-\infty.\end{equation}

\begin{Theorem}\label{mlcor:trivial}  Assume that $n,m$ are  non-zero integers,   $g\in\SL(2,\BC) , \tr(g)=\al\ne  \pm 2,$ \ 
$w=[x^n,y^m]\in[\CF_2,  \CF_2].$  
 Then $$\ML(S_{w,g})=\CO(S_{w,g}).$$\end{Theorem}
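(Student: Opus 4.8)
The plan is to prove the statement by contradiction, turning a hypothetical nontrivial $\BC^+$-action into an impossible constraint on the logarithmic Kodaira dimension. The essential input is already available: \propref{coniclem:nonegative} gives $\ov{\kappa}(S_{w,g})\ge 0$ for every $w=[x^n,y^m]$ with $n,m\ne 0$ and every $g$ with $\tr(g)=\al\ne\pm 2$. Thus the whole theorem reduces to the general principle, recorded in the discussion preceding the statement, that a variety of nonnegative log Kodaira dimension admits no nontrivial additive group action.

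Concretely, I would suppose toward a contradiction that $\ML(S_{w,g})\ne\CO(S_{w,g})$. By definition this means that $\LND(\CO(S_{w,g}))$ contains a nonzero locally nilpotent derivation $\pl$, and hence that $\Aut(S_{w,g})$ contains a subgroup isomorphic to $\BC^+$, namely the image of the associated action $\varphi_{\pl}$. The general orbit of such a nontrivial action is isomorphic to $\BC=\BA^1$, so it has log Kodaira dimension $-\infty$ and dimension $1$. Applying the Iitaka inequality \eqnref{Iitaka1} of \remarkref{kodaira-inequalities} to the connected group $\BC^+$ acting on $S_{w,g}$, exactly as in the derivation of \eqnref{ml:infty}, yields $\ov{\kappa}(S_{w,g})\le -\infty+\dim S_{w,g}-1=-\infty$. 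This contradicts $\ov{\kappa}(S_{w,g})\ge 0$, and the contradiction forces $\ML(S_{w,g})=\CO(S_{w,g})$, which is exactly the assertion that $S_{w,g}$ carries no $\BC^+$-actions.

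I do not expect a serious obstacle at the level of this theorem itself, since the argument is formal once the two ingredients are in place; the genuine work sits upstream in \propref{coniclem:nonegative}, which rests on the lower bound $\ov{\kappa}(H_{w,\al})\ge 0$ from \propref{traceprop:kappa} together with Fujino's subadditivity inequality applied to the fibration $p_{XV}$, whose general fibers are twice-punctured rational curves by \propref{prop:main}(e). The one point I would want to verify is that the log Kodaira dimension is being used in a way that is insensitive to the possible singularities of $S_{w,g}$, whose singular locus lies over the finite singular set of $H_{w,\al}$ by \lemref{trace:singular}; but since a nontrivial $\BC^+$-orbit through a general, hence smooth, point is already $\BA^1$, and \eqnref{Iitaka1} is applied over the smooth locus, this causes no difficulty and the proof closes immediately.
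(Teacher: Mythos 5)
Your proposal is correct and follows essentially the same route as the paper: assume $\ML(S_{w,g})\ne\CO(S_{w,g})$, deduce a nontrivial $\BC^+$-action, conclude $\ov{\kappa}(S_{w,g})=-\infty$ via the Iitaka-type inequality \eqref{Iitaka1} (exactly the derivation of \eqref{ml:infty}), and contradict \propref{coniclem:nonegative}. The paper's proof is precisely this three-line contradiction, with the genuine work residing upstream in \propref{coniclem:nonegative} just as you note.
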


\begin{proof}  Indeed,  assume that  $\ML(S_{w,g})\ne\CO(S_{w,g}).$ Then  $S_{w,g}$ admits a $\BC^+-$action and $\ov \kappa(S_{w,g})=-\infty.$  But according to \propref{coniclem:nonegative} $\ov \kappa(S_{w,g})\ge 0.$ 
The contradiction shows that $\ML(S_{w,g})=\CO(S_{w,g}).$\end{proof}


\section{Jordan properties  of $\Aut(S_{w,g})$.}\label{jordan}

 We are working with quasiprojective threefolds and use the following   facts.
 \begin{Remark}\label{jordanprop:known}
   \begin{enumerate}
   \item  The group $GL(n,\BC)$ is Jordan   (\cite{Jordan} );
 \item The Cremona group $  \mathrm{Cr}_n$ of birational selfmaps of $\BP^n$ is Jordan (\cite [Theorem 5.3]{Serre1}, \cite[Theorem 3.1]{Serre2}\cite{PS16});
\item   Assume that $W$ is a quasiprojective    irreducible variety of dimension $d\le 3$  that  is not  birational to $E\times \BP^2, $  where $E$ is an elliptic curve.  Then $\Aut (W)$ is Jordan (\cite[Corollary 5]{BZopen}).

 \end{enumerate} \end{Remark}

\begin{Theorem}\label{jordanthm:jordan} 
 Let  $n,m$ be non-zero integers.  Let  $\al\ne\pm 2$ be a complex number, and $g\in \SL(2,\BC) $ a  matrix with  trace   $\tr(g)=\al.$  Let $S_{w,g}:=\{(x,y)\in \SL(2,\BC)\times \SL(2,\BC) \ | w(x,y)=g\}.$Then  \begin{itemize}\item[(a)]$\Aut(S_{w,g})$ is Jordan;
%
\item[(b)]  If $ w\in \CF_2 , \ w=[x^n,y^m]$  and $|n|>2$ and  $|m|>2$ then $\Aut(S_{w,g})$  is very Jordan.\end{itemize}\end{Theorem}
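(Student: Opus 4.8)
The plan rests on the conic-bundle picture $\Tr\colon S:=S_{w,g}\to H:=H_{w,\al}$, whose general fibre is a rational curve with two punctures, i.e. a $\BC^*$, together with the birational identification $S\sim_{bir}H\times\BP^1$ furnished by \lemref{coniclem:product} and \propref{prop:main}(c). This $\BC^*$-fibration is the orbit map of the conjugation action of the torus $T_g:=Z_G(g)\cong\BC^*$ on $S$ (which preserves $w(x,y)=g$ because $g$ centralises $T_g$); by \propref{prop:main}(e) its orbits are exactly the $\Tr$-fibres, so $\CO(S)^{T_g}=\CO(H)$. Since $\ML(S)=\CO(S)$ (\thmref{mlcor:trivial}), $S$ admits no $\BC^+$-action, so the identity component of $\Aut(S)$ is a torus $T'\supseteq T_g$ and is normal in $\Aut(S)$. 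In the cases that matter below $T'$ will equal $T_g\cong\BC^*$, and then $\Aut(S)$ acts on $T_g$ through $\pm1$, so $T_g$ is almost central. All of this lets me push $\Aut(S)$ down to the base.

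\textbf{Part (b).} When $|n|,|m|>2$, \propref{traceprop:kappa}(4) gives $\ov{\kappa}(H)=2=\dim H$; as the $\Tr$-fibres are $\BC^*$ with $\ov{\kappa}=0$, the inequalities of \remarkref{kodaira-inequalities}(1) force $\ov{\kappa}(S)=2$, so $\Tr$ is (birationally) the canonical log-Iitaka fibration of $S$ and $\Tr^{*}\CO(H)\subset\CO(S)$ is $\Aut(S)$-invariant. This yields $\rho\colon\Aut(S)\to\Aut(H)$, and $\Aut(H)$ is finite by \remarkref{kodaira-inequalities}(2) (here also $T'=T_g=\BC^*$, since a larger $T'$ would induce a $\BC^*$-action on $H$). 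The kernel $\Gamma_0=\ker\rho$ preserves each fibre and so acts on the generic fibre inside $\Aut(\BC^*)=\BC^*\rtimes\BZ/2$; the elements acting by scaling form an abelian subgroup $\Gamma_0^{+}$, normal in $\Aut(S)$ (a conjugate of a fibrewise scaling is again a fibrewise scaling). Since $[\Gamma_0:\Gamma_0^{+}]\le2$ and $\rho$ has finite image, $F:=\Aut(S)/\Gamma_0^{+}$ is finite; with this $\Gamma_0^{+}$ and $F$, \defnref{groups}(2) shows $\Aut(S)$ is very Jordan.

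\textbf{Part (a).} As $S$ is an irreducible quasiprojective threefold, \remarkref{jordanprop:known}(3) reduces me to the case $S\sim_{bir}E\times\BP^2$. Comparing maximal rationally connected quotients of $S\sim_{bir}H\times\BP^1$ and of $E\times\BP^2\sim_{bir}E\times\BP^1\times\BP^1$ shows this occurs iff $H\sim_{bir}E\times\BP^1$; and if $|n|=1$ or $|m|=1$ then $H$ is rational (\corref{tracecor:rational}), so the case cannot occur and \remarkref{jordanprop:known}(3) applies. Thus I may assume $|n|,|m|\ge2$ and $H\sim_{bir}E\times\BP^1$. Composing $\Tr$ with the projection $pr\colon H\to E$ and passing to a smooth model realises the Albanese map, giving a canonical $\psi\colon\Aut(S)\to\Aut(E)=E\rtimes F_0$ (with $|F_0|\le6$) whose kernel acts on the generic Albanese fibre $Y_\eta\sim_{bir}\BP^2_{\BC(E)}$, hence embeds into $\mathrm{Cr}_2(\BC(E))$, which is Jordan (Serre's theorem). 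Now \lemref{elliptic} gives a dichotomy: if $pr$ is not surjective, then $\Aut(S)$ preserves the non-empty finite set $E\setminus pr(H)$, so $\psi$ has finite image and $\Aut(S)$ is a finite extension of the Jordan group $\ker\psi$, hence Jordan.

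\textbf{The main obstacle} is the remaining branch of \lemref{elliptic}, where $H\sim_{bir}E\times\BP^1$ has $k\ge3$ punctures on the generic ruling fibre. Then $S$ is genuinely birational to $E\times\BP^2$, so $\Bir(S)$ is \emph{not} Jordan and \remarkref{jordanprop:known}(3) is unavailable: I must prove that the far smaller biregular group $\Aut(S)$ is Jordan by hand. I expect to exploit the rigidity of the $k\ge3$ marked points (they block fibrewise Cremona maps). When the puncture configuration varies with $e\in E$ one gets $\ov{\kappa}(H)=2$ and reduces to the finite-$\Aut(H)$ argument of Part (b); in the isotrivial remainder $H\sim_{bir}E\times C$ with $C=\BP^1\setminus\{k\text{ points}\}$ of finite automorphism group, and the $C$-factor is rigid. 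The delicate point — the real work — is to control, inside an arbitrary finite $B\subset\Aut(S)$, the interplay of the almost-central torus $T_g\cong\BC^*$, the $E$-translations in the image of $\psi$, and the image in $\Aut(H)$ or $\mathrm{Cr}_2(\BC(E))$, so as to extract from $B$ a normal abelian subgroup of index bounded independently of $B$; it is exactly \lemref{elliptic} that keeps this central-extension bookkeeping finite.
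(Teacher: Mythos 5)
Your part (b) is essentially correct and ends exactly where the paper's proof of its Lemma~\ref{kappa2} ends (an extension of the finite group $\Aut(H)$ by a fiberwise group that is abelian up to index $2$), but your descent of $\Aut(S)$ to $\Aut(H)$ uses a different mechanism: you identify $\Tr$ with the log-Iitaka fibration of $S$ (via $\ov{\kappa}(S)=2$) and invoke its functoriality, whereas the paper argues elementarily that $s$ and $t$ omit at least three values on $H$, so every curve on which they are non-constant has at least three punctures, and hence the two-punctured conic fibers must be sent to conic fibers, i.e. $\Aut(S)=A_0$. Your route is legitimate, but the characterization you rely on (a fibration with $\ov{\kappa}=0$ fibers over a base of dimension $\ov{\kappa}(S)$ \emph{is} the Iitaka fibration, and the induced maps on $H$ are biregular, not merely birational) is asserted rather than proved or cited. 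In part (a), the rational case ($|n|=1$ or $|m|=1$) and the branch of Lemma~\ref{elliptic} in which $pr\colon H\to E$ is not surjective are handled correctly and parallel the paper (Albanese functoriality and the stabilizer of the finite set $E\setminus pr(H)$ in place of the paper's diagram with $h$ and $\Aut(E')$; both reduce to $\mathrm{Cr}_2(\BC(E))$ being Jordan plus a finite quotient).

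The genuine gap is the one you flag yourself: the remaining branch of Lemma~\ref{elliptic}, where the general fiber of $h\colon H\to E$ has $k\ge 3$ punctures. There you only describe what you ``expect'' to do, and the extraction of a normal abelian subgroup of bounded index from an arbitrary finite $B\subset\Aut(S)$ is left undone; as written, part (a) is unproven in precisely this case. The paper closes it with a short rigidity argument you are missing, applied to the \emph{conic} fibers rather than to the fibers of $h$: for $v\in H$ the fiber $F_v=p_{XV}^{-1}(v)\cap S$ is a rational curve with exactly two punctures (\propref{prop:main}(e)); such a curve admits no non-constant morphism to the elliptic curve $E$ (it would extend to a non-constant morphism $\BP^1\to E$) and no non-constant morphism to a rational curve with at least three punctures (extend to $\BP^1\to\BP^1$ and count preimages of the three boundary points, which would have to lie in the two punctures). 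Hence $p_{XV}(f(F_v))$ is a point for every $f\in\Aut(S)$, so $\Aut(S)=A_0$: \emph{every} automorphism preserves the conic fibration, with no exceptional subgroup left over. This is the precise form of your intuition that ``the marked points block fibrewise Cremona maps.'' Once $\Aut(S)=A_0$, the paper concludes by Lemma~\ref{kappa2}: it asserts $\ov{\kappa}(H)=2$ in this branch (so the isotrivial situation $H\sim_{bir}E\times C$ that worries you is excluded), whence $\Aut(H)$ is finite by Sakai's theorem, and the fiberwise part lies in a torus of $\PGL(2,\BC(H))$ fixing the two boundary points of each conic, hence is abelian up to index $2$. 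It is this combination --- a descent homomorphism defined on all of $\Aut(S)$ with abelian-up-to-index-two kernel and finite image --- that eliminates the Heisenberg-type (theta-group) extensions you correctly identify as the danger; without the $\Aut(S)=A_0$ step, no amount of bookkeeping inside $B$ will produce the required bound.
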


\begin{proof}   According to \propref{prop:main},   it is sufficient to prove that   the group $\Aut(S)$ is Jordan (resp., very Jordan), where $S=\pi(S_{w,g})\subset 
X_{P_w,\al}:=X_{\al}.$  By \lemref{coniclem:product}
$X_{P_w,\al}$   is birational to $H\times \BP^1,$    where $H:=H_{w,\al}.$

 Let $A_0\subset \Aut(S)$ be the subgroup  of such $f\in \Aut(S)$   that  $p_{XV}\circ f$ is constant along the fibers of $p_{XV}.$ 
  If $f\in  A_0,$ it may  be included into  the following commutative diagram
\begin{equation}\label{cd100}  
\begin{CD}
S @>{ f}>> S  \\
@V p_{XV} VV @Vp_{XV} VV \\
H @>{f_*}>>H
\end{CD}. 
\end{equation}

Here $f_*$ is a rational map (\cite [Lemma 10.7(Kawamata)]{Itaka}, defined everywhere, thus regular.  Moreover, $f_*\in\Aut(H)$ since $f\in\Aut(S),$
thus images under $f$ of different fibers   of  $p_{XV}$ are disjoint.

\begin{Lemma}\label{kappa2} Assume that $\ov \kappa(H)=2.$ Then $A_0$ is  very Jordan. \end{Lemma}
\begin{proof}
 Since $X_{\al}\setminus S$  meets every fiber   of  $p_{XV}$ at two distinct points, 
 we may  use \cite[Lemma  6]{BZopen} (where  $X_{\al}$  and $H$ play roles of $U$ and $V,$ respectively) and extend 
 $f$ to an  automorphism $\tilde f$  of $X_\al.$
  
 The group $A_0$ contains a subgroup $A_1$ of those $f\in  A_0$  for which ${f_*}=id . $  The group $A_1$ contains the subgroup $A_2$  of index at most 2 of those   $f$ for which $\tilde f$ 
 fix all points of $D:=X_\al\setminus S.$

Let us show that $A_2$ is commutative. Let $\CK=\BC(H)$ 
be the field of rational functions on $ H.$ The divisor $D$ is either presented by two points $\mu_1,\mu_2$ in $\BP^1(\CK)$ ( if it is union of two sections) or  a point $\mu$  in 
$\BP^1(\CE),$  where $\CE=\CK[j]$ is a quadratic extension of $\CK, j^2\in\CK.$  
  
Group  $A_2$ is a subgroup  of a group  $B\subset\PGL(2,\CK) $   that consist of those elements  of  $\PGL(2,\CK) $  that fix    $\mu_1$ and $\mu_2,$ if there are two sections or 
 $\mu$ otherwise. 
 The group $B$ is either a split algebraic torus (if $\mu_i$ are defined over $\CK$) or non-split algebraic torus. In any case it is an abelian group. 
 Thus its subsgroup $A_2$ is abelian as well, and $A_1$ is very Jordan,  Since$[A_1:A_2]\le 2.$
  
   By construction there is a short exact sequence:

$$0\to A_1\to  A_0\to\Aut(H)$$

Since $\ov{\kappa}(H)=2, $  the group $\Aut(H)$ is finite ( \cite{Sakai}, see \remarkref{kodaira-inequalities}) . 

 It follows that $A_0$ is very Jordan.
\end{proof}

Proof of (a).  We may assume that $|n|\ge|m|\ge  1.$ 
 If $m|=1$ 
 by \corref{tracecor:rational} the surface $H$  is rational. Thus $X_{\al}$  is a rational threefold,
 thanks to \lemref{coniclem:product}.
 Hence $\Aut(S)$ is a subgroup of the Cremona group $\mathrm{Cr}_3,$ that is Jotdan.  It follows that $\Aut(S)$ is Jordan as well.

Assume that $|m|\ge 2.$

In light of item (4) of \remarkref{jordanprop:known} it is sufficient to check 
only case when there is a birational map $\varphi: X_\al\to E\times\BP^2 ,$  where $E$ is an elliptic curve.  
 We have the following commutative diagram 

\begin{Diagram}\label{ep2}
 \begin{alignat}{5}
& &X_\al   &\quad\stackrel{\varphi}{\rightarrow}\quad  &E\times\BP^2&\notag \\
&{p_{XV}} &\downarrow   &  \quad \searrow  {\tilde h}   &  \downarrow  {p_E}   &  \notag \\
& &H          &  \quad\stackrel{h}{\rightarrow}\quad  & E\quad&\end{alignat}
\end{Diagram}

 Here \begin{itemize}\item  $p_E : E\times\BP^2\to E$ is the projection to the first factor;\item  ${\tilde h:=p_E\circ\varphi};$
\item $ h$ is defined, since fibers of $p_{XV}$ are rational curves, thus are mapped  by $p_E\circ\varphi$   into a point in $E;$
\item $h$ is rational   (\cite [Lemma 10.7(Kawamata)]{Itaka};
\item $h$ is regular, since $E$ is an elliptic curve;
\item  for a general  point $e\in E$ the fiber $h^{-1}(e)=p_{XV}(\varphi^{-1}p_E^{-1}( e))\subset H$ is a rational curve because $p_E^{-1}( e)\cong \BP^2.$
\end{itemize}
  It follows that $H$ is birational to $E\times \BP^1.$  Thus, by \lemref{elliptic} there are two options.
  
  {\bf 1.} $h$ is not surjective.  Let $f\in\Aut(S). $ Then since $\BP^2$ cannot be mapped dominantly into $E$, automorphism  $ \tilde f $ may be included into the following commutative diagram
  
  \begin{Diagram}\label{ep3}
 \begin{alignat}{5}
&   X_\al &\stackrel{\tilde f}{\rightarrow}  & X_\al&\notag \\
&\downarrow  {\tilde h}   &     &  \downarrow  {\tilde h}   &  \notag \\
& E          & \stackrel{\tilde f_!}{\rightarrow} & E&\end{alignat}
\end{Diagram}
  
 where  ${\tilde f_!}$    is an automorphism of $E':=h(S)\subset E.$  Let $A_E\subset \Aut(S)$ be the subgroup of those $f\in\Aut(S)$ for which $\tilde f_!= id.$   The group $A_E$ is a subgroup of the Cremona group $	\mathrm{Cr}_2(K) ,$  where $K=\BC(E),$ hence is Jordan (see \remarkref{jordanprop:known}.)
  We have the following exact sequence:
 
$0\to A_E\to\Aut(S)\to\Aut(E').$ 
 
 Since $\Aut(E')$  is  finite and  $A_E$ is Jordan, $\Aut(S)$ is Jordan as well.

   {\bf 2.} The general fiber $h^{-1}(e)$ is a rational curve with at least  3 punctures.   In this case $\ov{\kappa}|(H)=2$ and   $\Aut(S)=A_0.$  
Indeed,  assume that $f\in \Aut(S)$ and $v=(s,t,u)$ is a point of $H.$ The fiber $ F_v:=p_{XV}^{-1}(v)\cap S\subset  S$ is a conic - a rational curve with two punctures.   
Hence  $ F_v:=p_{XV}^{-1}(v)$ cannot be mapped dominantly to $E$ and cannot be mapped dominantly to a fiber of $h$ since the last one has at least three punctures. Thus $p_{XV}(f(F))$ has to be a point in $H.$
     Thanks to \lemref{kappa2} the group $Aut(S)=A_0$ is very Jordan, hence Jordan.
 
 Proof of (b).  In this case  $s$ and $t$ omit at least three values of $\BP^1$ (roots of $\phi_n(s)$ and $\phi_m(t),$ respectively, and $\infty$) on $H.$
 Thus every curve along which $s$ or $t$ is not constant has to have at least three punctures. 
 
  For a point $v\in H$ the fiber   $F_v:=p_{XV}^{-1}(v)\cap S\subset  S$  has only two punctures, and the same should be valid for a curve $C_v:=p_{XV}(f(F_v))$ for any automorphism $f\in \Aut(S).$    It  follows that $s$ and $t$ are constant along $C_v$ and $C_v$ is also a fiber of $p_{XV},$
 i.e., $f\in A_0$   and   $Aut(S)=A_0.$   Now (b) follows from \propref{traceprop:kappa}  and \lemref{kappa2}.
\end{proof}

\section{Decalarations}

{\ }

{\bf Data availability} No datasets were generated or analized during the current study

{\ }

{\bf Conflict of interest} The author  has not disclosed any competing interests.

\end{document}